\pgfplotsset{compat=1.14}
\DeclareMathOperator{\vol}{Vol}
\DeclareMathOperator{\pr}{pr}
\DeclareMathOperator{\SU}{SU}
\DeclareMathOperator{\wt}{wt}
\DeclareMathOperator{\Tr}{Tr}
\DeclareMathOperator{\Ad}{Ad}
\DeclareMathOperator{\UU}{U}
\DeclareMathOperator{\GL}{GL}
\newcommand{\n}{^{-1}}
\newtheorem{Thm}{Theorem}[section]
\newtheorem{Pro}[Thm]{Proposition}
\newtheorem{Lem}[Thm]{Lemma}
\newtheorem{Cor}[Thm]{Corollary}
\theoremstyle{definition}
\newtheorem{Ex}[Thm]{Example}
\theoremstyle{remark}
\newtheorem{Rmk}[Thm]{Remark}
\newcommand\nnfootnote[1]{%
  \begin{NoHyper}
  \renewcommand\thefootnote{}\footnote{#1}%
  \addtocounter{footnote}{-1}%
  \end{NoHyper}
}
\begin{document}
\title{Concentration of symplectic volumes on Poisson homogeneous spaces}
\author{Anton Alekseev\and Benjamin Hoffman\and Jeremy Lane \and Yanpeng Li}

\newcommand{\Addresses}{{% additional braces for segregating \footnotesize
  \bigskip
  \footnotesize

  \textsc{Section of Mathematics, University of Geneva, 2-4 rue du Li\`evre, c.p. 64, 1211 Gen\`eve 4, Switzerland}\par\nopagebreak
  \textit{E-mail address}: \texttt{Anton.Alekseev@unige.ch}

  \medskip

  \textsc{Department of Mathematics, Cornell University, 310 Malott Hall, Ithaca, NY 14853, USA}\par\nopagebreak
  \textit{E-mail address}: \texttt{bsh68@cornell.edu}
  
  \medskip
  
  \textsc{Section of Mathematics, University of Geneva, 2-4 rue du Li\`evre, c.p. 64, 1211 Gen\`eve 4, Switzerland}\par\nopagebreak
  \textit{E-mail address}: \texttt{lane203j@gmail.com}
  
  \medskip

  \textsc{Section of Mathematics, University of Geneva, 2-4 rue du Li\`evre, c.p. 64, 1211 Gen\`eve 4, Switzerland}\par\nopagebreak
  \textit{E-mail address}: \texttt{yanpeng.li@unige.ch}

}}
\date{}
\maketitle

\nnfootnote{\emph{Keywords:} Poisson-Lie groups, homogeneous spaces, coadjoint orbits, symplectic geometry}

\begin{abstract}
For a compact Poisson-Lie group $K$, the homogeneous space $K/T$ carries a family of symplectic forms $\omega_\xi^s$, where $\xi \in \mathfrak{t}^*_+$ is in the positive Weyl chamber and $s \in \mathbb{R}$. The symplectic form $\omega_\xi^0$ is identified with the natural $K$-invariant symplectic form on the $K$ coadjoint orbit corresponding to $\xi$. The cohomology class of $\omega_\xi^s$ is independent of $s$ for a fixed value of $\xi$.

In this paper, we show that as $s\to -\infty$, the symplectic volume of $\omega_\xi^s$ concentrates in arbitrarily small neighborhoods of the smallest Schubert cell in $K/T \cong G/B$. This strengthens an earlier result of \cite{LT} and is a step towards a conjectured construction of global action-angle coordinates on $\operatorname{Lie}(K)^*$
\cite[Conjecture 1.1]{ALL}.

\end{abstract}

\section{Introduction}

Let $K$ be a compact connected Lie group with maximal torus $T$ and let $G=K^{\mathbb{C}}$ denote its complexification.  Let $\mathfrak{t}$ denote the Lie algebra of $T$. As our results concern the homogeneous space $K/T$, we may assume without loss of generality that $K$ is semisimple and simply connected.  

The homogeneous space $K/T$ carries an interesting family of symplectic structures $\omega_\xi^s$ parameterized by $s \in \mathbb{R}$ and elements of a positive Weyl chamber, $\xi \in \mathfrak{t}^*_+$. Following \cite{LW}, the Iwasawa decomposition $G = AN_-K$ defines dual Poisson-Lie groups $(K,\pi_K)$ and $(AN_-,\pi_{AN_-})$. The symplectic leaves of $\pi_{AN_-}$ are the orbits of the so-called dressing action of $K$ on $AN_-$. Let $\mathcal{D}_{\xi} \subset AN_-$ denote the dressing orbit through $\exp(\sqrt{-1}\xi)$, where $\xi \in \mathfrak{t}^*$ is identified with an element of $\mathfrak{t}$ via the Killing form. For all $s \neq 0$ and $\xi\in \mathfrak{t}_+^*$, fix the $K$-equivariant identification of $K/T$ with $\mathcal{D}_{s\xi}$ such that $eT \mapsto \exp(s\sqrt{-1}\xi)$ and define\footnote{Note that for $s<0$, $\omega_\xi^s$ is the symplectic structure on $K/T$ defined by $-s\pi_{\lambda}$, $\lambda = -s\sqrt{-1}\xi$, where $\pi_{\lambda}$ is the Poisson structure defined by Lu in \cite[Notation 5.11]{Lu00}.}
\begin{equation}\label{pi^s}
	 \pi_\xi^s := s\pi_{AN_-}\vert_{\mathcal{D}_{s\xi}}, \quad \omega_\xi^s := (\pi_\xi^s)\n.
\end{equation}
%%%%%%%%%
\iffalse 

To see the new connection with Lu, apply (-)^*, which is an anti-Poisson map from AN_+ to AN_- (and vis versa). This map is equivariant with respect to the left dressing action of K on AN_- that we use (arising from the decomposition G = AN_-K), and the left dressing action that Lu uses (arising from G = KAN_+).

\fi
%%%%%%%%%
For $s=0$ and $\xi\in \mathfrak{t}_+^*$, fix the $K$-equivariant identification of $K/T$ with the coadjoint orbit $\mathcal{O}_{\xi}$ such that $eT \mapsto \xi$ and define $\omega_\xi^0$ to be the Kostant-Kirillov-Souriau symplectic form. 

The family $\omega_\xi^s$ was studied in \cite{A97,Lu00} and has several nice properties. First, the action of $K$ on $K/T$ is Poisson: the action map $K \times K/T \to K/T$ is a Poisson map with respect to $s \pi_K$ and $\pi_\xi^s$ for all $s$ and $\xi$. In other words, $(K/T,\pi_\xi^s)$ is a \emph{Poisson homogeneous space} for $(K,s\pi_K)$. Poisson homogeneous spaces for $(K,\pi_K)$ were classified in \cite{Ka1}. Second, for a fixed value of $\xi$ the forms $\omega_\xi^s$ are isotopic for all $s\in \mathbb{R}$ \cite{A97}. It follows that for fixed $\xi$ and arbitrary $s$ the forms $\omega_\xi^s$ are cohomologous. In particular, their symplectic (Liouville) volumes are the same:
\begin{equation}\label{equal_volumes}
\vol(K/T,\omega^s_\xi) = \vol(K/T,\omega_\xi^0).
\end{equation}

Let $B \subset G$ be the positive Borel subgroup (corresponding to $\mathfrak{t}_+^*$).
The flag variety $G/B$ is isomorphic to $K/T$ and admits a stratification into Schubert cells $B w B/B$, indexed by elements $w$ of the Weyl group. The smallest Schubert cell is the point $eB \in G/B$ and the biggest Schubert cell, $B w_0 B/B$, corresponding to the longest element $w_0 \in W$, is dense in $G/B$.

It follows from \cite[Proposition 5.12]{Lu00} that the rescaled family of Poisson structures $s^{-1} \pi_\xi^s$ admits, for all $\xi$, a common limit $\pi^\infty$ when $s \to -\infty$. The Poisson structure $\pi^\infty$ coincides with the image of the standard Poisson structure $\pi_K$ under the projection map $K \to K/T$ and its symplectic leaves are exactly the Schubert cells. Theorem 2.2 in \cite{LT} implies the following:

\begin{Thm}\label{wrongtheorem}
Let $\overline{U}$ be a compact subset of the big Schubert cell $B w_0 B /B$. Then, for any $\xi \in \mathfrak{t}^*_+$ and $\varepsilon>0$, there exists $s_0 \in \mathbb{R}$ such that for $s\leqslant s_0$,
\[
  \vol\left(\overline{U},\omega_{\xi}^s\right)<\varepsilon.
\]
\end{Thm}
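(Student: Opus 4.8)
The plan is to avoid tracking volumes of any shrinking family of neighbourhoods and instead to read off the decay directly from the limiting Poisson structure $\pi^\infty$. Write $\Sigma_0 := Bw_0B/B$ for the open dense big cell and set $2N = \dim_{\mathbb R}(K/T)$. By \cite[Proposition 5.12]{Lu00} the rescaled bivector fields $\rho_s := s\n\pi_\xi^s$ converge, as $s\to-\infty$, to $\pi^\infty$ on the compact manifold $K/T$, and $\pi^\infty$ restricts to a \emph{nondegenerate} bivector on the top-dimensional symplectic leaf $\Sigma_0$; let $\omega^\infty := (\pi^\infty|_{\Sigma_0})\n \in \Omega^2(\Sigma_0)$ be the associated symplectic form (whose coefficients may blow up only as one approaches the smaller Schubert cells $K/T\setminus\Sigma_0$).

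The first step is to localize to $\overline U$. For every $s \neq 0$ the bivector $\rho_s$ is nondegenerate on all of $K/T$, because $\pi_\xi^s = s\,\pi_{AN_-}|_{\mathcal D_{s\xi}}$ and $\mathcal D_{s\xi}$ is a symplectic leaf; since $\overline U$ is compact and lies in the open set $\Sigma_0$ on which $\pi^\infty$ is nondegenerate, and inversion of bivector fields is continuous on the nondegenerate locus, the convergence $\rho_s\to\pi^\infty$ yields $\rho_s\n \to \omega^\infty$ uniformly on $\overline U$. Hence there are $s_1\in\mathbb R$ and $C<\infty$ with $\int_{\overline U}\bigl|\tfrac{1}{N!}(\rho_s\n)^{\wedge N}\bigr| \leqslant C$ for all $s\leqslant s_1$.

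The second step is the elementary scaling identity $\omega_\xi^s = (\pi_\xi^s)\n = (s\,\rho_s)\n = s\n\,\rho_s\n$ (rescaling a nondegenerate bivector by $s$ rescales its inverse two-form by $s\n$), valid on $\overline U$. Taking $N$-th exterior powers and integrating over $\overline U$ gives
\[
\vol(\overline U,\omega_\xi^s) \;=\; |s|^{-N}\int_{\overline U}\bigl|\tfrac{1}{N!}(\rho_s\n)^{\wedge N}\bigr| \;\leqslant\; |s|^{-N}\,C ,
\]
so any $s_0\leqslant s_1$ with $|s_0|^{-N}C<\varepsilon$ works. Note that the hypothesis $\overline U\subset\Sigma_0$ is essential: by \eqref{equal_volumes} the total volume equals the nonzero constant $\vol(K/T,\omega_\xi^0)$, so the symplectic mass does not disappear but escapes onto the lower-dimensional Schubert cells. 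The only delicate point I anticipate is the \emph{mode} of convergence in \cite[Proposition 5.12]{Lu00}: one needs it to be at least $C^0$ and locally uniform on $\Sigma_0$ in order to pass to inverses of bivector fields, and if the cited statement is phrased more weakly one must first upgrade it (for instance from Lu's explicit formula in dressing coordinates) to uniform convergence on $\overline U$. The argument is otherwise soft, which is precisely why it yields only Theorem~\ref{wrongtheorem} rather than the sharper concentration on the \emph{smallest} cell $eB$ proved below, whose proof must actually pin down where the escaping mass accumulates.
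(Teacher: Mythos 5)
Your proof is correct and takes a genuinely different route from the paper's. The paper localizes via the equivariant moment map $\Psi_s(kT)=\tfrac{1}{s\sqrt{-1}}\log\operatorname{pr}_A(k\exp(s\sqrt{-1}\xi))$, observes that its Duistermaat--Heckman measure is independent of $s$ (the $T$-fixed points, their images, and their isotropy weights are all $s$-independent), and then invokes \cite[Theorem 2.2]{LT} to show that $\Psi_s(\overline U)$ is contained in a ball of radius $O(|s|^{-1})$ around the vertex $w_0\xi$ of the moment polytope; since DH is a fixed atom-free measure, $\vol(\overline U,\omega_\xi^s)\leqslant \mathrm{DH}\bigl(B_{r/|s|}(w_0\xi)\bigr)\to 0$. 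You instead bypass the moment map and \cite{LT} entirely, using only the $C^0$ limit $s^{-1}\pi_\xi^s\to\pi^\infty$ from \cite[Proposition~5.12]{Lu00} together with the nondegeneracy of $\pi^\infty$ on the big cell to control $\rho_s^{-1}$ uniformly on $\overline U$, and then read off the decay from the elementary scaling $\omega_\xi^s=s^{-1}\rho_s^{-1}$, giving $\vol(\overline U,\omega_\xi^s)\leqslant C|s|^{-N}$ with $N=\ell(w_0)$. Your caveat about the mode of convergence in \cite{Lu00} is legitimate but resolvable, since Lu's bivector is given by explicit formulas smooth jointly in $s$ and the base point, so pointwise convergence upgrades to locally uniform convergence on $\Sigma_0$. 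Both routes in fact produce the same power $|s|^{-N}$ (the DH density vanishes to order $N-r$ at the vertex, and integrating over a ball of radius $|s|^{-1}$ gives $|s|^{-N}$), but yours makes this rate explicit without any polytope-combinatorics. The trade-off is exactly as you say: your argument is softer and more self-contained, but it only tracks mass leaving $\Sigma_0$ and cannot by itself see that the mass concentrates specifically at $eB$ rather than spreading across the lower cells; for the Main Theorem the paper needs the finer mechanisms of Sections \ref{section3}--\ref{section4}. The moment-map/DH framing the paper sets up here is also reused conceptually later, which is a secondary reason the authors likely preferred it even for this weaker statement.
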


\begin{proof}
    Fix $\xi \in \mathfrak{t}_+^*$ and identify $K/T$ with the dressing orbit $\mathcal{D}_{s\xi}$ as above, equipped with $s\pi_{AN_-}$. Let $\operatorname{pr}_A\colon G \to A$ denote projection with respect to the Iwasawa decomposition $G = AN_-K$. Identify $\mathfrak{t}\cong \mathfrak{t}^*$ via the Killing form. With these identifications,
    \[
        \Psi_s\colon K/T \to \mathfrak{t}^*, \quad kT \mapsto  \frac{1}{s\sqrt{-1}}\log \operatorname{pr}_A(k\exp(s\sqrt{-1}\xi)),
    \]
    is a moment map for the action of $T$ on $(K/T,\omega_\xi^s)$ by left multiplication, for all $s\neq 0$ \cite[Theorem 4.13]{LuRatiu}. The $T$-fixed points, their weights, and their images under the moment map do not depend on $s$. Thus the Duistermaat-Heckman measure on the moment polytope defined by $\Psi_s$ is independent of $s$.

    Fix a compact subset $\overline{U} \subset B w_0 B /B$.   By \cite[Theorem 2.2]{LT}, there exists $r > 0$ such that 
    \[
        || \log \operatorname{pr}_A(k\exp(s\sqrt{-1}\xi)) - sw_0\sqrt{-1}\xi || < r
    \]
    for all $\xi \in \mathfrak{t}_+$, $s < 0$, and  $k \in \overline{U}$. 
    %%%%%%%%%%%%%%%%%%%%%%%%%%%%%%%%%%
    \iffalse
    
    There are several things needed to see how this is equivalent to LT, Theorem 2.2. 
    
    - We switch the order ke^x vs e^xk by conjugating, because of our definition of the dressing action. 
    - Conjugation replaces k with k^{-1}. This is irrelevant: if U is a subset of a big schubert cell, then so is U^{-1}. Thus we can apply LT with or without an inverse on the k's
    
    - LT use the Bruhat decomposition B_-wB. B_-wB = w_0Bw_0wB. 
    - If h^{-1} in B_-B, then h^{-1} = w_0k^{-1}, for some k^{-1} in Bw_0B. Applying inverse to the set U, we get kw_0, k in Bw_0B.
    - Finally, note that we can apply LT to obtain the formula above since
    
    	\operatorname{pr}_A(kw_0\exp(s\sqrt{-1}w_0\xi)) = \operatorname{pr}_A(k\exp(s\sqrt{-1}\xi))
    
    \fi
    %%%%%%%%%%%%%%%%%%%%%%%%%%%%%%%%%
    The norm $||\cdot ||$ is taken with respect to the Killing form. It follows that for fixed $\xi \in \mathfrak{t}_+$ and all $s<0$, 
    \[
        ||\Psi_s(kT) - w_0\xi|| = \left|\left| \frac{1}{s\sqrt{-1}}\log \operatorname{pr}_A(k\exp(s\sqrt{-1}\xi)) - w_0\xi\right|\right| < \frac{r}{|s|}
    \]
    for all $k \in \overline{U}$. Since the Duistermaat-Heckman is independent of $s$, this implies that $\vol(\overline{U},\omega_{\xi}^s)<\varepsilon$ for all $s<0$ sufficiently large.
\end{proof}

In other words, any compact subset of the big Schubert cell is depleted of symplectic volume as $s\to -\infty$. Since total volume is constant for fixed $\xi$, this implies that the volume concentrates in a small neighborhood of the other Schubert cells. 

\begin{Ex} 
As an illustration of this phenomenon, consider the example of $K=\SU(2)$.  Identify $\mathfrak{t}^* = \mathbb{R}$ and $\xi \in \mathfrak{t}_+^* = \mathbb{R}_{> 0}$. Let $(z,\varphi)\in (-1,1)\times (0,2\pi)$ be cylindrical coordinates on the unit-sphere $S^2 \subset \mathbb{R}^3$ and fix the $K$-equivariant identification of $K/T$ with $S^2$ such that $eT$ is identified with the pole $z=1$.  The family of symplectic forms is

% Note: the factor of 1/\xi in the formula for \omega_\xi^s is incorrect and should be removed when we make changes to the JSG submission.
\[
    \omega_\xi^{s} = 
    \left\{
    \arraycolsep=1.4pt\def\arraystretch{2.2}
    \begin{array}{cll} \displaystyle\frac{\sinh(2s\xi)}{2s(\cosh(2s\xi)+z\sinh(2s\xi))} \, dz \wedge d\varphi,\quad && s \neq 0;\\ 
    \xi dz \wedge d\varphi, \quad && s = 0.
    \end{array}\right.
\]
One can derive this formula, for instance, from \cite[Example 5.4]{Lu00}. Note that $\omega_\xi^0=\xi dz \wedge d\varphi$ are the rotation-invariant area forms on $S^2$. We leave it as an exercise to the reader to show that the cohomology class of $\omega_\xi^s$ is indeed independent of $s$ and that for $s\ll 0$ the volume concentrates near the pole $z=1$, which was identified with the smallest Schubert cell, $eB$.
\end{Ex}

In general, there are many Schubert cells in $G/B$ of positive codimension and the question of how volume arranges itself on a neighborhood of those Schubert cells when $s\ll 0$ remains.  The main result of this paper is an answer to this question (and a strengthening of Theorem \ref{wrongtheorem}):

\begin{Thm}[Main Theorem] \label{maintheorem}
	Let $U$ be an open neighborhood of the smallest Schubert cell $eB$. Then for any $\xi \in \mathfrak{t}^*_+$ and $\varepsilon>0$, there exists $s_0\in \mathbb{R}$ such that for $s\leqslant s_0$, 
	\[
		\vol\left(U,\omega^s_\xi\right)>(1-\varepsilon)\vol(K/T,\omega^s_\xi).
	\]
\end{Thm}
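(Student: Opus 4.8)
The plan is to upgrade Theorem~\ref{wrongtheorem} from ``volume escapes every compact subset of the big cell'' to ``almost all volume lives near the point cell $eB$'' by exhausting $K/T$ with a nested family of sets and controlling the Duistermaat--Heckman measure uniformly. The key observation is the same one used in the proof of Theorem~\ref{wrongtheorem}: the map $\Psi_s\colon K/T\to\mathfrak t^*$, $kT\mapsto \tfrac{1}{s\sqrt{-1}}\log\operatorname{pr}_A(k\exp(s\sqrt{-1}\xi))$, is a moment map for the $T$-action on $(K/T,\omega_\xi^s)$, and the associated Duistermaat--Heckman measure $\mu_{DH}$ on the moment polytope $\mathcal P_\xi$ is \emph{independent of $s$}, since the $T$-fixed points, their isotropy weights and their moment images do not depend on $s$. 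Therefore $\vol(K/T,\omega_\xi^s)=\mu_{DH}(\mathcal P_\xi)$ for all $s$, and to estimate $\vol(U,\omega_\xi^s)$ it suffices to push forward the Liouville measure of $\omega_\xi^s$ to $\mathcal P_\xi$ via $\Psi_s$ and show that, as $s\to-\infty$, this pushforward concentrates near the vertex $v_0:=w_0\xi$ of $\mathcal P_\xi$ that is the image of $eB$ (note $eT\mapsto \tfrac{1}{s\sqrt{-1}}\log\operatorname{pr}_A(\exp(s\sqrt{-1}\xi)) = \xi$ under the identification $eT\mapsto\exp(s\sqrt{-1}\xi)$; one checks which vertex of $\mathcal P_\xi$ carries $eB$ and relabels accordingly, consistently with the $w_0$ appearing in Theorem~\ref{wrongtheorem}).

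The main step is a quantitative version of the estimate already extracted from \cite[Theorem 2.2]{LT}. Fix a Bruhat-type decomposition of $K/T\cong G/B$ and, for each Weyl group element $w$, let $C_w$ denote the corresponding cell, with $C_{w_0}$ the point cell $eB$. I would show: for each $w\neq w_0$ there is a constant $c_w>0$ and, for every $\delta>0$, a compact set $\overline U_w\subset C_w$ exhausting $C_w$ (i.e.\ $C_w\setminus \overline U_w$ has small Liouville measure for the \emph{fixed} form $\omega_\xi^0$, equivalently small $\mu_{DH}$-measure after pushforward) such that
\[
  \bigl\|\,\log\operatorname{pr}_A(k\exp(s\sqrt{-1}\xi)) - s\,w\sqrt{-1}\xi\,\bigr\| < r_w
\]
for all $s<0$ and all $k$ in (a lift of) $\overline U_w$, with $r_w$ independent of $s$. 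This is the content of the asymptotic analysis of the Iwasawa projection in \cite{LT}, applied cell by cell rather than only on the big cell. Dividing by $|s|$, this gives $\|\Psi_s(kT)-v_w\|<r_w/|s|$ on $\overline U_w$, where $v_w$ is the vertex $w\xi$ of $\mathcal P_\xi$. Hence for $s$ sufficiently negative, $\Psi_s(\overline U_w)$ is contained in an arbitrarily small ball around $v_w$, and in particular disjoint from $v_0$ and from any fixed neighborhood of $v_0$.

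With this in hand the argument closes as follows. Given the open neighborhood $U\supset eB$ and $\varepsilon>0$: choose a small ball $B(v_0,\rho)$ around the vertex $v_0$; because $\mu_{DH}$ is a fixed (finite, absolutely continuous on the interior) measure on $\mathcal P_\xi$, shrinking $\rho$ makes $\mu_{DH}\bigl(\mathcal P_\xi\setminus B(v_0,\rho)\bigr)$ as small as we like — but we must instead argue that $\mu_{DH}(\{v_0\})$ or rather the complement behaves well, so more carefully: cover $K/T$ by the finitely many cells $C_w$; for each $w\neq w_0$ pick $\overline U_w\subset C_w$ with $\mu_{DH}(\Psi_s(C_w\setminus \overline U_w))=\vol(C_w\setminus\overline U_w,\omega_\xi^0)$ less than $\varepsilon/(2|W|)$ (possible since lower-dimensional cells and the boundary are $\mu_{DH}$-null, and $\overline U_w$ exhausts $C_w$), and for $\overline U_{w_0}$ take the point. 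Then $\vol\bigl(K/T\setminus\bigcup_w \overline U_w,\omega_\xi^s\bigr)<\varepsilon/2$ for all $s$, by $s$-independence of $\mu_{DH}$. For $s\leqslant s_0$ with $s_0$ chosen via the quantitative estimate above, every $\overline U_w$ with $w\neq w_0$ satisfies $\Psi_s(\overline U_w)\subset B(v_w,\rho)$ with $\rho$ small enough that $B(v_w,\rho)\cap \Psi_s^{-1}(\Psi_s(U))$ is empty — more directly, small enough that $\overline U_w\cap U=\emptyset$ is \emph{not} what we need; rather we need $\overline U_w\subset K/T\setminus U$. Since $U$ is a fixed open neighborhood of $eB$ and $\Psi_s(\overline U_w)$ clusters at $v_w\neq v_0$, and since away from $eB$ the map $\Psi_s$ separates the far cells from $U$ for $s\ll0$, one deduces $\overline U_w\subset K/T\setminus U$ for $s\leqslant s_0$. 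Hence
\[
  \vol(K/T\setminus U,\omega_\xi^s)\;\leqslant\;\sum_{w\neq w_0}\vol(\overline U_w,\omega_\xi^s)\;+\;\varepsilon/2\;\longrightarrow\;?
\]
and here is the crux: we still must show $\sum_{w\neq w_0}\vol(\overline U_w,\omega_\xi^s)\to 0$. This does \emph{not} follow from $s$-independence of the total DH measure alone; it requires that the pushforward $(\Psi_s)_*(\text{Liouville}_{\omega_\xi^s})$, though equal to the fixed measure $\mu_{DH}$, assigns vanishing mass to the region $\Psi_s(\overline U_w)\subset B(v_w,\rho)$ as $\rho\to0$ — i.e.\ that $\mu_{DH}$ has no atom at $v_w$, which is true since $v_w$ is a vertex of a polytope and $\mu_{DH}$ is absolutely continuous. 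Thus $\vol(\overline U_w,\omega_\xi^s)=\mu_{DH}(\Psi_s(\overline U_w))\leqslant\mu_{DH}(B(v_w,\rho))$, and choosing $\rho$ small first (uniformly over the finitely many $w$) makes each term $<\varepsilon/(2|W|)$, so $\vol(K/T\setminus U,\omega_\xi^s)<\varepsilon\cdot\vol(K/T,\omega_\xi^s)/\vol(K/T,\omega_\xi^s)$, i.e.\ $<\varepsilon\,\vol(K/T,\omega_\xi^s)$ after absorbing the normalization, giving the claim.

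I expect the genuinely delicate point to be the cell-by-cell asymptotics of the Iwasawa projection: \cite[Theorem 2.2]{LT} as quoted gives the estimate only for the big cell $Bw_0B/B$, and one needs the analogous uniform bound $\|\log\operatorname{pr}_A(k\exp(s\sqrt{-1}\xi)) - s\,w\sqrt{-1}\xi\|<r_w$ on compact subsets of each smaller cell $C_w$. Establishing this — presumably by an induction on $\dim C_w$, or by factoring $k$ through the Bruhat/Bott--Samelson structure and applying the big-cell estimate inside each Levi or on each ``sub-flag'', tracking that the $A$-component converges to the weight $w\xi$ — is where the real work lies; once it is available, the combinatorial/measure-theoretic packaging above (finitely many cells, $s$-independent absolutely continuous DH measure, no atoms at vertices) is routine.
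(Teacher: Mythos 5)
Your proposal takes a genuinely different route from the paper. The paper does \emph{not} try to push the moment-map / Duistermaat--Heckman argument past Theorem~\ref{wrongtheorem}; instead it passes to cluster coordinates on $K^*$, the detropicalization maps $\mathfrak{L}_s$, and the partial tropicalization $PT(K^*)$, and proves the concentration there (Proposition~\ref{prop 3.5}) before transporting it back via $\mathfrak{E}_s^{-1}\circ\mathfrak{L}_s$ (Proposition~\ref{lastcor}). This is not just a stylistic choice: there is a real obstruction to the approach you sketch, beyond the cell-by-cell Iwasawa estimate you flag as ``the genuinely delicate point.''

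The fatal step is the assertion that $\vol\bigl(K/T\setminus\bigcup_w \overline U_w,\omega_\xi^s\bigr)<\varepsilon/2$ for \emph{all} $s$, ``by $s$-independence of $\mu_{DH}$.'' The Duistermaat--Heckman measure $\mu_{DH}=(\Psi_s)_*(\text{Liouville}_{\omega^s_\xi})$ is a measure on the moment polytope, and its $s$-independence says nothing about $\vol(V,\omega_\xi^s)$ for a fixed Borel set $V\subset K/T$, because $\Psi_s$ itself moves with $s$. Indeed, for $V=C_w\setminus\overline U_w$ the quantity $\mu_{DH}(\Psi_s(V))$ depends on $s$ (and is only an upper bound for $\vol(V,\omega_\xi^s)$, not an equality as you write). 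More tellingly, your argument simultaneously claims that for $s\ll 0$ every $\vol(\overline U_w,\omega_\xi^s)$ is small \emph{and} the remainder $\vol(K/T\setminus\bigcup_w\overline U_w,\omega_\xi^s)$ is small; since these pieces exhaust $K/T$ and the point cell has zero measure, that would force $\vol(K/T,\omega_\xi^s)<\varepsilon$ for $s\ll0$, contradicting Equation~\eqref{equal_volumes}. What actually happens is the opposite: the remainder $K/T\setminus\bigcup_w\overline U_w$ contains a deleted neighborhood of $eB$, and as $s\to-\infty$ its Liouville volume tends to the \emph{full} volume $\vol(K/T,\omega_\xi^0)$, precisely because that is where the volume concentrates. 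So the tail is exactly the region your argument must control, and the DH invariance gives no handle on it; this is the structural reason the paper abandons the moment-map packaging after Theorem~\ref{wrongtheorem} and builds the tropical machinery in Sections~\ref{section3} and~\ref{section4} instead.

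Two smaller points: the moment image of $eB$ is $\xi$, not $w_0\xi$ (the vertex $w_0\xi$ is where compact subsets of the big cell accumulate), so your identification of $v_0$ is backwards; and the proposed cell-by-cell extension of \cite[Theorem~2.2]{LT} to every Bruhat cell is, as you note, not available off the shelf and would need a uniform estimate that does not degenerate as one approaches cell boundaries other than $eB$ --- which, given the contradiction above, would have to be stated with some care about which boundary strata are excluded.
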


In other words, any compact subset of $G/B$ not containing $eB$ eventually gets depleted of symplectic volume as $s \to -\infty$. 

The remainder of the paper is devoted to setting up the proof of Theorem \ref{maintheorem}, which is given below.    
Section \ref{section2} describes the dual Poisson-Lie group $(K^*,\pi_{K^*}) := (AN_-,\pi_{AN_-})$. There are two important maps defined for $s\neq 0$, 
\begin{align*}
\mathfrak{E}_s \colon & \mathfrak{k}^*\to K^* \\
\mathfrak{L}_s \colon & \mathbb{R}^{r+m} \times \mathbb{T}^m \to K^*
\end{align*}
 which are defined in Equations \eqref{STS maps} and \eqref{detrop maps}, respectively. Here $r=\dim (T)$, $2m=\dim (K/T)$, and $\mathbb{T}^{m}$ is a compact torus of dimension $m$. The map $\mathfrak{E}_s$ is a diffeomorphism. It is $K$-equivariant with respect to the coadjoint and dressing actions and has the property that $\mathfrak{E}_s(\xi) = \exp(s\sqrt{-1}\xi)$ for all $\xi \in \mathfrak{t}^*$.
 The map $\mathfrak{L}_s$ is a diffeomorphism onto its image and the image of $\mathfrak{L}_s$ is an open dense subset of $K^*$ that is independent of $s$. The intersection $\mathfrak{L}_s(\mathbb{R}^{r+m}\times \mathbb{T}^m) \cap \mathfrak{E}_s(\mathcal{O}_{\xi})$ is an open dense subset of $\mathfrak{E}_s(\mathcal{O}_{\xi})$ for all $\xi \in \mathfrak{t}_+^*$. 
Moreover, all the maps in the following diagram are Poisson:\begin{equation}\label{main diagram}
    \begin{tikzcd}
    (\mathcal{O}_\xi, \pi^s_\xi) \arrow[r,hookrightarrow] & (\mathfrak{k}^*,\pi^s = \mathfrak{E}_s^*(s\pi_{K^*})) \arrow[r,"\mathfrak{E}_s"] & (K^*,s\pi_{K^*}) & (\mathbb{R}^{r+m}\times \mathbb{T}^{m},  \mathfrak{L}_s^*(s\pi_{K^*})). \arrow[l, swap,"\mathfrak{L}_s"]
    \end{tikzcd}
\end{equation}

There is a distinguished open subset $PT(K^*)\subset \mathbb{R}^{r+m} \times \mathbb{T}^m$ called the \emph{partial tropicalization of }$K^*$, introduced in \cite{ABHL}, equipped with a constant Poisson structure $\pi_{PT}$. As $s\to -\infty$, the Poisson structure $ \mathfrak{L}_s^*(s\pi_{K^*})$ converges to $\pi_{PT}$ uniformly on certain subsets that exhaust $PT(K^*)$ (Section \ref{section PT}). Section \ref{section3} shows that the symplectic volume of the leaves of $ \mathfrak{L}_s^*(s\pi_{K^*})$ concentrates in $PT(K^*)$ as $s\to -\infty$ (Proposition \ref{prop 3.5}). Section \ref{section4} contains the proof of Proposition \ref{lastcor}, which says that, under the maps in \eqref{main diagram}, points of $PT(K^*)$ correspond to points near $\mathfrak{t}_+^*\subset \mathfrak{k}^*$ when $s\ll 0$. This allows us to translate Proposition \ref{prop 3.5} into a statement about the symplectic volume of $(K/T,\omega^s_\xi)$.

\begin{proof}[Proof of Theorem \ref{maintheorem}]

Let $\mathcal{N}_{s\xi}\subset\mathbb{R}^{r+m}\times \mathbb{T}^m$ denote the preimage $( \mathfrak{E}_s\n\circ \mathfrak{L}_s )\n(\mathcal{O}_\xi)$, which is a symplectic leaf of $ \mathfrak{L}_s^*(s\pi_{K^*})$, and denote its symplectic form $\eta_{s\xi} = (\mathfrak{E}_s\n \circ \mathfrak{L}_s)^* \omega_\xi^s$. In Proposition \ref{prop 3.5}, we prove that for all $\varepsilon>0$, there is a compact subset $D_\varepsilon \subset  PT(K^*)$ such that
\[
\lim_{s\to -\infty} \vol\left(\mathcal{N}_{s\xi} \cap D_\varepsilon,\eta_{s\xi}\right) \geqslant (1-\varepsilon)\vol(\mathcal{N}_{s\xi},\eta_{s\xi}) = (1-\varepsilon)\vol(K/T,\omega_\xi^s).
\]
In Proposition \ref{lastcor}, we show there exists $s_0< 0$ such that for all $s\leqslant s_0$,  
\[
    \mathfrak{E}_s\n\circ \mathfrak{L}_s(\mathcal{N}_{s\xi} \cap D_\varepsilon) \subseteq U.
\]
Since $\mathfrak{E}_s\n\circ \mathfrak{L}_s$ is a Poisson isomorphism, it preserves volumes of the symplectic leaves. Thus 
\[
    \vol\left(U,\omega^s_{\xi}\right) \geqslant \vol\left(\mathfrak{E}_s\n\circ \mathfrak{L}_s(\mathcal{N}_{s\xi} \cap D_\varepsilon),\omega^s_{\xi}\right)= \vol\left(\mathcal{N}_{s\xi} \cap D_\varepsilon,\eta_{s\xi}\right).
\]
Combining with the limit above completes the proof.
\end{proof}

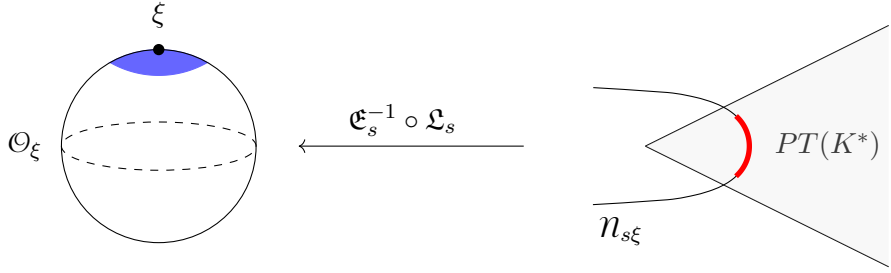
\begin{figure}[htp]
\centering
\begin{tikzpicture}[scale=1.6]	
		
		% neighborhood of \xi
		\draw[draw=blue!60!white,fill=blue!60!white] plot[smooth,samples=100,domain=-.4:.4] (\x,{sqrt(.64-\x*\x)}) -- plot[smooth,samples=100,domain=-.4:.4] (\x,{- sqrt(.64-\x*\x)+2*.693});

		% the coadjoint orbit
		\draw (0,0) circle (.8);
		\draw[dashed](0,0) ellipse (.8 and .2);
		\node at (0,.8)[circle,fill,inner sep=1.5pt]{};
		%\node at (0,-.8)[circle,fill,inner sep=1.5pt]{};
		%\draw (0,-.9)  node[align=left, below] {$w_0\xi$};
		\draw (0,.9)  node[align=left, above] {$\xi$};
		\draw (-1.1,0)  node[align=left] {$\mathcal{O}_{\xi}$};

		% arrow
		\draw[<-] (1.15,0) -- (3,0);
		\draw (2,0)  node[align=left,above] {$\mathfrak{E}_s\n\circ \mathfrak{L}_s$};
		
		% the partial tropicalization
		\draw (4,0)--(6,1);
		\draw (4,0)--(6,-1);
		\draw (5.5,0)  node[align=right] {$PT(K^*)$}; 
		\fill[gray!20,nearly transparent] (4,0) -- (6,1) -- (6,-1) -- cycle;
        
        % image of the orbit
        \draw[domain=-.97:.97,smooth,variable=\t]plot ({4+ln(2*cosh(1*2) - 2*cosh(1*2*\t))/(2*1)},\t/2);
        
        % image label
        \draw (3.8,-.5)  node[align=left,   below] {$\mathcal{N}_{s\xi}$};
        
        % image of a neighborhood of \xi
        \draw[domain=-.5:.5,smooth,variable=\t,line width=.7mm, red]plot ({4+ln(2*cosh(1*2) - 2*cosh(1*2*\t))/(2*1)},\t/2);
        
        %\draw [line width=.7mm, blue](5,-.5)--(5,.5);
        %\draw (1.25,0)  node[align=right] {$\mathcal{P}_{\xi}$};
	\end{tikzpicture}	
	\caption{As $s \to -\infty$, volume of the symplectic leaves $\mathcal{N}_{s\xi} = (\mathfrak{E}_s\n \circ \mathfrak{L}_s)\n(\mathcal{O}_\xi)$ concentrates on subsets of $\mathcal{N}_{s\xi}\cap PT(K^*)$, illustrated in red. For $s$ sufficiently large, the image of the red subset is contained in an arbitrarily small neighborhood of $\xi$, illustrated in blue. }
\label{fig 0}
\end{figure}

A motivation for our study is provided by the following idea. There exist Poisson isomorphisms between $\mathfrak{k}^*$ and $K^*$ called Ginzburg-Weinstein isomorphisms after the authors of \cite{GW}. Given a Ginzburg-Weinstein isomorphism $\gamma: \mathfrak{k}^* \to K^*$, its scaling $\gamma^s(x):=\gamma(sx)$ is a Poisson isomorphism with respect to $\pi_{\mathfrak{k}^*}$ and $s\pi_{K^*}$. 
Composing $\gamma^s$ with $\mathfrak{L}_s\n$ defines coordinates on every regular coadjoint orbit which are almost global action-angle coordinates for $s\ll0$. Conjecturally, the $s \to -\infty$ limit of this composition defines global action-angle coordinates on the regular coadjoint orbits. This has already been shown to be true for $K=\UU(n)$, where for a certain choice of Ginzburg-Weinstein diffeomorphism and cluster seed, the limit is the classical Gelfand-Zeitlin system \cite{ALL}. 

{\bf Acknowledgements.} We are grateful to J.H. Lu and R. Sjamaar for their useful comments and discussions. Research of A.A., J.L. and  Y.L. was supported in part by the grant MODFLAT of the European Research Council (ERC), by the grants number 178794 and 159581 of the Swiss National Science Foundation (SNSF) and by the NCCR SwissMAP of the SNSF. B.H. was supported by the National Science Foundation Graduate Research Fellowship under Grant Number DGE-1650441. A.A., B.H., and Y.L. express their gratitude for the hospitality and support of the Simons Center for Geometry and Physics, during their visit in 2018.

%%%%%%%%%%%%%%%%%%%%%%%%%
\section{Background}
\label{section2}

Fix the following notation. Let $G$ be a connected simply-connected semisimple complex Lie group of rank $r$. Fix a compact real form $K\subset G$ and a Cartan subgroup $H \subset G$, and let $(\cdot)^*:G\to G$ be the anti-involution of $G$ under which elements $k\in K$ satisfy $k\n= k^*$. Denote the  Lie algebras of $G$, $K$, and $H$ by $\mathfrak{g}$, $\mathfrak{k}$, and $\mathfrak{h}$ respectively.  Fix a choice of positive roots of $\mathfrak{g}$ with respect to $\mathfrak{h}$. Denote the lattice of integral weights by $P$, and the semigroup of dominant integral weights by $P_+$. We write $h\mapsto h^\mu\in \mathbb{C}^\times$ for the multiplicative character $H\to \mathbb{C}^\times$ determined by $\mu\in P$.   Let $I = \{1,\dots,r\}$ index the simple roots, $\alpha_i\in \mathfrak{h}^*$, the simple coroots, $\alpha_i^\vee\in \mathfrak{h}$, and the fundamental weights, $\omega_i$, which by definition satisfy $\omega_i(\alpha_j^\vee)=\delta_{ij}$.  Denote the Weyl group of $G$ by $W$. Let $s_i\in W$ be the simple reflection generated by $\alpha_i$ and let $w_0$ be the longest element of $W$, with length denoted by $m$. 

Let $T$ be the maximal torus of $K$ which has Lie algebra $\mathfrak{t}=\mathfrak{h}\cap\mathfrak{k}$. Let $\mathfrak{a} = \sqrt{-1} \mathfrak{t}$ and denote the corresponding subgroup of $G$ by $A$. Corresponding to the choice of positive roots, we have opposite maximal unipotent subgroups $N_{\pm}$ with Lie algebras $\mathfrak{n}_{\pm}$ as well as opposite Borel subgroups $B_{\pm} = HN_{\pm}$ with Lie algebras $\mathfrak{h}\oplus \mathfrak{n}_{\pm}$. Fix a set of Chevalley generators $F_i\in \mathfrak{n}_-,$ $\alpha_i^\vee\in \mathfrak{h}$, $E_i\in \mathfrak{n}$, $i \in I$. Recall the Iwasawa decompositions $G = AN_-K$ and $\mathfrak{g} = \mathfrak{n}_- \oplus \mathfrak{a} \oplus \mathfrak{k}$.

Fix an invariant non-degenerate bilinear form $(\cdot,\cdot)$ on $\mathfrak{g}$.  The isomorphism $\mathfrak{k} \cong \mathfrak{k}^*$ determined by $(\cdot,\cdot)$ embeds $\mathfrak{t}^* \subseteq \mathfrak{k}^*$, as the image of $\mathfrak{t}$. Let $\mathfrak{t}_+^* \subseteq \mathfrak{t}^*$ be the open cone such that  $\sqrt{-1}\mathfrak{t}_+^* \subseteq \mathfrak{h}^*$ is the interior of the real cone spanned by $P_+$. We refer to both $\mathfrak{t}_+^*$ and $\sqrt{-1} \mathfrak{t}_+^*$ as the positive Weyl chamber.

\subsection{Dressing orbits and compact Poisson-Lie groups}\label{section 2.1}

Recall that a Poisson-Lie group $(K,\pi)$ is a Lie group $K$ equipped with a Poisson structure $\pi$ such that the multiplication map $K\times K\to K$ is Poisson (with respect to the product Poisson structure on $K\times K$). For example, the canonical Lie-Poisson structure $\pi_{\mathfrak{k}^*}$ on the dual $\mathfrak{k}^*$ of a Lie algebra $\mathfrak{k}$ is linear, so $(\mathfrak{k}^*,\pi_{\mathfrak{k}^*})$ is a Poisson-Lie group with respect to vector addition.

For $G$ as above, both $K$ and $AN_-$ have natural Poisson-Lie group structures defined as follows (see \cite{LW} for details). Let $\Im(\cdot,\cdot)$ be the imaginary part of the fixed $G$-invariant non-degenerate bilinear form $(\cdot,\cdot)$. Then $\mathfrak{k}$ and $\mathfrak{n}_- \oplus \mathfrak{a}$ are isotropic subspaces with respect to $2\Im(\cdot,\cdot)$, and $2\Im(\cdot,\cdot)$ defines an isomorphism $\mathfrak{n}_-\oplus \mathfrak{a} \cong \mathfrak{k}^*$. This identification endows $\mathfrak{k}$ and $\mathfrak{k}^*$ with the structure of dual Lie bialgebras. Since $K$ and $AN_-$ are simply connected, the Lie bialgebra structures on $\mathfrak{k}$ and $\mathfrak{k}^*$ integrate to define Poisson-Lie group structures $\pi_K$ on $K$ and $\pi_{K^*}$ on $AN_-$, respectively. These Poisson-Lie group structures are dual, since they arise by integrating dual Lie bialgebras, thus one denotes $K^* = AN_-$, and refers to $(K^*,\pi_{K^*})$ as the \emph{dual Poisson-Lie group} of $(K,\pi_K)$.

Both $\mathfrak{k}^*$ and $K^*$ have naturally defined $K$ actions. The \emph{coadjoint action} of $K$ on $\mathfrak{k}^*$ is defined in terms of the adjoint action by the equation
\[
  \langle \Ad_k^*\xi,x\rangle=\langle \xi, \Ad_{k\n} x\rangle,\qquad k\in K,\,\xi \in \mathfrak{k}^*, \text{ and } x\in \mathfrak{k}.
\]
The coadjoint action preserves $\pi_{\mathfrak{k}^*}$, and the symplectic leaves of $\pi_{\mathfrak{k}^*}$ are the coadjoint orbits. The \emph{dressing action} of $K$ action on $K^*$ is defined by re-factorizing $kb\in G$ according to the Iwasawa decomposition. If
\[
  kb=b'k'\in AN_-K, \qquad k,k'\in K,~b,b'\in K^*,
\]
then the dressing action of $k$ on $b$ is defined as $^k b=b'$. The symplectic leaves of $\pi_{K^*}$ are the dressing orbits. In other words, they are the joint level sets of the Casimir functions \cite{LW},
\begin{equation}\label{Casimir functions}
  C_i(b)^2:=\Tr\left(\rho^{\omega_i}\left(bb^*\right)\right), \qquad b\in K^*,
\end{equation}
where $\rho^{\omega_i}$ is the fundamental irreducible $G$-representation with highest weight $\omega_i\in P_+$. The map $\varphi \colon b \mapsto bb^*$ is a diffeomorphism of $K^*$ onto the set $S=\{g\in G\mid g^*= g\}$.

There is a family of diffeomorphisms $\mathfrak{E}_s\colon \mathfrak{k}^* \to K^*$ parameterized by $s\neq 0$ \cite{FR}. Let $\psi\colon \mathfrak{k}^*\to \mathfrak{k}$ be the $K$-equivariant isomorphism given by the fixed bilinear form on $\mathfrak{g}$. Then, define
\begin{equation}\label{STS maps}
	\mathfrak{E}_s\colon \mathfrak{k}^*\xrightarrow{~~\psi~~} \mathfrak{k} \xrightarrow{\exp\left(2s\sqrt{-1}\cdot\right)} S \xrightarrow{\varphi^{-1}} K^*= AN_-.
\end{equation}

The map $\mathfrak{E}_s$ is equivariant with respect to the coadjoint and dressing actions of $K$. Let $\mathcal{O}_{\xi}$ be the coadjoint orbit through $\xi\in  \mathfrak{t}_+^*$. Denote by $\mathcal{D}_{s\xi}$ the dressing orbit through $\mathfrak{E}_s(\xi) = \exp\left(  s\sqrt{-1}\psi(\xi)\right)$. Since $\mathfrak{E}_s$ is $K$-equivariant, $\mathfrak{E}_s(\mathcal{O}_\xi) = \mathcal{D}_{s\xi}$. 

%%%%%%%%%%%%%%%%%%%%%%%%%%%%%%
\subsection{Cluster coordinates on double Bruhat cells}\label{section 2.2}

The \emph{double Bruhat cell} determined by a pair of elements $u,v\in W$, is the intersection
\[
  G^{u,v}:=BuB\cap B_-vB_- \subset G.
\]
In particular, we will consider $G^{w_0,e}=Bw_0B\cap B_-$, which is an open dense subset of $B_-$.

Let $G_0=N_-HN$ be the open dense subset of elements in $G$ that admit a Gaussian decomposition.  
For a dominant weight $\mu\in P_+$, the \emph{principal minor} $\Delta_{\mu,\mu}$ is a regular function $G\to \mathbb{C}$ uniquely determined by its value on $G_0$:
\[
  \Delta_{\mu,\mu}(n_-hn)=h^\mu, \mbox{~for any~} n_-\in N_-, h\in H, n\in N.
\]
For any two weights $\gamma$ and $\delta$ of the form $\gamma=w\mu$, $\delta=v\mu$, for some $w,v\in W$, the {\em generalized minor} $\Delta_{w\mu, v\mu}$ is the regular function on $G$ given by
\[
  \Delta_{\gamma,\delta}(g)=\Delta_{ w\mu, v\mu}(g)=\Delta_{\mu,\mu}(\overline  w^{\,-1}g\overline{v}), \mbox{~for~} g\in G,
\]
where $\overline{w}$ is a specific lift of $w\in W$ to $G$ as in \cite[Equation~1.5]{BKII}.

Fix a reduced word $\mathbf{i}=(i_1,\dots,i_m)$, $i_j\in I$, for $w_0=s_{i_1}\cdots s_{i_m}$. Let $\bm{R}= \bm{R^-}\cup \bm{R^+}$, where $\bm{R^-}=[-r,-1]$ and $\bm{R^+}=[1,m]$. 
For $1<k<m$, let $v_k=s_{i_m}\cdots s_{i_{k+1}}$ and let $v_m=e$. For $k\in \bm{R}^-$, let $i_k=-k$ and $v_k=w_0$. Consider the functions
\[
    \Delta_{k}:=\Delta_{v_k\omega_{i_k},\omega_{i_k},} \quad k\in \bm{R}.
\] The functions $\Delta_k$ form a seed for the upper cluster algebra structure on $\mathbb{C}[G^{w_0,e}]$ described in \cite{BFZ}. 

Being an upper cluster algebra implies that any $f\in \mathbb{C}[G^{w_0,e}]$ is a Laurent polynomial in the functions $\Delta_k$. The functions $\Delta_k$ then determine an open embedding
\begin{equation}\label{clusterfor1connected}
  \sigma(\mathbf{i}) \colon(\mathbb{C}^\times)^{m+r}\to G^{w_0,e},
\end{equation}
which is a (birational) inverse to
\[
G^{w_0,e}\to \mathbb{C}^{m+r};\qquad g\mapsto (\Delta_{-r}(g),\dots, \Delta_m(g)).
\]
Note that there is no term $\Delta_k$ with index $k=0$. 

We conclude this section by recalling how generalized minors appear in matrix entries of representations of $G$. A dominant integral weight $\mu \in P_+$ can be written uniquely as
\[
\mu = \sum_{i\in I} c_i(\mu) \omega_i, \qquad c_i(\mu)\in \mathbb{Z}_{\geqslant 0}.
\]
Then the function $\Delta_{w_0\mu,\mu}$  can be written as
\begin{equation}
\label{deltadef}
\Delta_{w_0\mu,\mu} = \prod_{i\in I} \Delta_{w_0\omega_i,\omega_i}^{c_i(\mu)}.
\end{equation}
One can check that
\[
  h\cdot \Delta_{w_0\mu,\mu} \cdot h'= h^{-w_0\mu}{h'}^{\mu} \Delta_{w_0\mu,\mu},  \qquad E_i\cdot \Delta_{w_0\mu,\mu}=\Delta_{w_0\mu,\mu}\cdot E_i=0 \text{~for~} i\in I,
\]
where $h,h'\in H$, and $G$ acts on $\mathbb{C}[G]$ in the standard way
\[
  (g\cdot f\cdot h)(x)=f(g\n x h)\qquad g,h,x\in G,~f\in \mathbb{C}[G].
\]

For a sequence of indices $\mathbf{j}=(j_1,\dots,j_n)$ in $I$, write $F_{\mathbf{j}}=F_{j_1}F_{j_2}\cdots F_{j_n} \in U(\mathfrak{g})$. Recall that the functions $F_{\mathbf{j}}\cdot \Delta_{w_0\mu,\mu} \cdot F_{\mathbf{k}}$ arise from representations of $G$ as follows. Let $(V,\rho\colon G\to \GL(V))$ be the irreducible $G$-module with highest weight $\mu$. Let $v_1,\dots, v_n$ be a weight basis of $V$, where $H$ acts on $v_j$ with weight $\wt(v_j)\in \mathfrak{h}^*$, and assume $\wt(v_1)=\mu$ and $\wt(v_n)=w_0 \mu$. Let $\rho_{j,k}(g)$ be the $(j,k)$-entry of the matrix for $\rho(g)$ with respect to the basis $\{v_j\}$. Then $\rho_{n,1}=c \Delta_{w_0 \mu,\mu}$, for some $c\in \mathbb{C}^\times$. We may choose the weight basis such that $c=1$. Each $\rho_{j,k}$ is a linear combination of terms of the form $F_{\mathbf{j}} \cdot \Delta_{w_0\mu,\mu} \cdot F_{\mathbf{k}}$, where $\mathbf{j}$ and $\mathbf{k}$ are such that
\begin{equation}
\label{weightcondition}
h\cdot (F_{\mathbf{j}} \cdot \Delta_{w_0\mu,\mu} \cdot F_{\mathbf{k}}) \cdot h' = h^{-\wt(v_j)} (h')^{\wt(v_k)} (F_{\mathbf{j}} \cdot \Delta_{w_0 \mu,\mu} \cdot F_{\mathbf{k}})
\end{equation}
for all $h,h'\in H$.

%%%%%%%%%%%%%%%%%%%%%%%%%%%%%%
\subsection{The partial tropicalization and its symplectic leaves}\label{section PT}

Recall from Section \ref{section 2.1} that $K^*=AN_-$. Let $\bm{S}=\{k\in \bm{R} \mid v_k\omega_{i_k}\neq\omega_{i_k} \}$. Then $|\bm{R}\backslash\bm{S}|=r$, and $\Delta_k(K^*)\subset \mathbb{R}_+$ if and only if $k\in \bm{R}\backslash \bm{S}$. The collection of functions
\[
    \{\Delta_k \mid k\in \bm{R}\}\cup \{\overline{\Delta_k} \mid k \in \bm{S}\}
\]
define a real coordinate system on an open dense subset of $K^*$. Equip $\mathbb{R}^{r+m}\times \mathbb{T}^m$ with coordinates $(\lambda_{\bm{R}},\varphi_{\bm{S}})$, where $\lambda_{\bm{R}}=(\lambda_k)_{k \in \bm{R}}$ and 
$\varphi_{\bm{S}}=(\varphi_k)_{k \in \bm{S}}$.

There is a Poisson manifold $(PT(K^*),\pi_{PT})$, called the \emph{partial tropicalization of} $K^*$, which was introduced in \cite{ABHL}. As a manifold, $PT(K^*)$ is defined as
\[
  PT(K^*) := \mathcal{C}\times \mathbb{T}^m \subset \mathbb{R}^{r+m} \times \mathbb{T}^m,
\]
where $\mathcal{C}$ is an open convex polyhedral cone of dimension $r+m$ defined by inequalities described in \cite{BKII} and \cite[Theorem 6.24]{ABHL}. The definition of $\mathcal{C}$ depends on the choice of reduced word $\mathbf{i}$ fixed in Section \ref{section 2.2}. More precisely, $\mathcal{C}$ is the set of points $x\in \mathbb{R}^{m+r}$ satisfying an inequality $\Phi^t(x)>0$, where $\Phi^t: \mathbb{R}^{m+r}\to \mathbb{R}$ is a certain piecewise-linear function called the tropical Berenstein-Kazhdan potential.
The Poisson structure $\pi_{PT}$ is constant in the coordinates $(\lambda_{\bm{R}},\varphi_{\bm{S}})$. The symplectic leaves of $PT(K^*)$ are the joint level sets of the coordinates $\lambda_{\bm{R}^-} = (\lambda_{-r},\dots,\lambda_{-1})$ \cite[Theorem~6.5]{ABHL2}. 

There is a correspondence between symplectic leaves of $PT(K^*)$ and regular coadjoint orbits of $K$, which we now describe. To each $\xi\in \mathfrak{t}^*_+$ we associate $\lambda_{\bm{R}^-}\in \mathbb{R}^r$ with coordinates
\[
\lambda_{-i} = ( w_0\omega_i, \sqrt{-1}\xi) \mbox{ for }i = -r, \ldots ,-1.
\]
Denote the symplectic leaf of $PT(K^*)$ that is the fiber of $\lambda_{\bm{R}^-}$ by $\mathcal{P}_{\xi}$. The corresponding coadjoint orbit is $\mathcal{O}_{\xi}$. For each fixed value of $s\neq 0$, the leaf $\mathcal{P}_{\xi}$ also corresponds to the dressing orbit $\mathcal{D}_{s\xi}$, defined in Section \ref{section 2.1}, 

Each symplectic leaf $\mathcal{P}_{\xi} \subset PT(K^*)$ inherits a symplectic form from $\pi_{PT}$  denoted by $\nu_\xi$. %%%See ABHL2 equation (24): we need the w_0 

\begin{Thm}\cite[Theorem 6.11]{ABHL2}
\label{theoremcomparevolume}
The symplectic volume of  $(\mathcal{P}_{\xi},\nu_{\xi})$  equals the symplectic volume of the coadjoint orbit $\mathcal{O}_{\xi} \subset \mathfrak{k}^*$ equipped with the Kirillov-Kostant-Souriau symplectic form:
  \[
  \vol\left(\mathcal{P}_{\xi} ,\nu_\xi\right)= \vol (\mathcal{O}_{ \xi},\omega_{\xi}).
  \]
\end{Thm}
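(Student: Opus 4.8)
Here is how I would prove Theorem~\ref{theoremcomparevolume}.

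The plan is to compute both volumes and match them, exploiting the fact that the symplectic leaf $(\mathcal{P}_{\xi},\nu_\xi)$ is about as simple as a symplectic manifold can be. By \cite[Theorem~6.5]{ABHL2} the leaf is the joint level set $\{\lambda_{\bm{R}^-}=\lambda_{\bm{R}^-}(\xi)\}$ inside $\mathcal{C}\times\mathbb{T}^m$, a $2m$-dimensional submanifold; and since $\pi_{PT}$ is constant in the coordinates $(\lambda_{\bm{R}},\varphi_{\bm{S}})$ with Casimirs $\lambda_{\bm{R}^-}$, the induced form $\nu_\xi$ is constant in the remaining coordinates $(\lambda_{\bm{S}},\varphi_{\bm{S}})$. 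The residual torus $\mathbb{T}^m$ acts on $\mathcal{P}_{\xi}$ by translation of the $\varphi_{\bm{S}}$-coordinates, and because each $\varphi_k$ is Poisson-conjugate to $\lambda_k$ this action is Hamiltonian, with moment map the corresponding linear function $\mu_\xi$ of $\lambda_{\bm{S}}$. Its image $Q_\xi:=\mu_\xi(\mathcal{P}_{\xi})$ is a unimodular image of the slice of $\mathcal{C}$ at level $\lambda_{\bm{R}^-}(\xi)$, and since that level depends linearly on $\xi$ we get $Q_{t\xi}=tQ_\xi$ for $t>0$. One should first record, from \cite{BKII} and \cite{ABHL}, that these slices are bounded, so that the volumes are finite.

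Granting this, $\mathcal{P}_{\xi}\to Q_\xi$ is a trivial principal $\mathbb{T}^m$-bundle on which $\nu_\xi=\sum_k d\lambda_k\wedge d\varphi_k$, up to the fixed constant coefficient matrix coming from the definition of $\pi_{PT}$; so a Fubini computation — equivalently, the Duistermaat--Heckman theorem in its most degenerate case — gives
\[
   \vol(\mathcal{P}_{\xi},\nu_\xi)=(2\pi)^m\,\vol(Q_\xi),
\]
the Euclidean volume of $Q_\xi$ being normalized by the lattice dual to the one generating the $\mathbb{T}^m$-action. This reduces the theorem to the purely combinatorial identity $\vol(Q_\xi)=(2\pi)^{-m}\vol(\mathcal{O}_{\xi},\omega_\xi)$.

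I would prove this identity first for $\xi$ a dominant integral weight and then extend by homogeneity and continuity. By the defining property of the tropical Berenstein--Kazhdan potential (\cite{BKII}, as used in \cite{ABHL}), $Q_\xi$ is a unimodular image of the Berenstein--Kazhdan polytope of $\xi$, whose lattice points enumerate a basis of the irreducible module of highest weight $\xi$; hence $\#(nQ_\xi\cap\mathbb{Z}^m)=\#(Q_{n\xi}\cap\mathbb{Z}^m)=\dim V_{n\xi}$. Comparing the leading Ehrhart coefficient $\vol(Q_\xi)$ with the leading term of $\dim V_{n\xi}=\prod_{\alpha>0}\frac{(n\xi+\rho,\alpha)}{(\rho,\alpha)}$ (Weyl's formula, $\rho$ the Weyl vector) gives $\vol(Q_\xi)=\prod_{\alpha>0}\frac{(\xi,\alpha)}{(\rho,\alpha)}$; and the same product equals $(2\pi)^{-m}\vol(\mathcal{O}_{\xi},\omega_\xi)$ by the Harish--Chandra/Kirillov volume formula for coadjoint orbits, provided the invariant form $(\cdot,\cdot)$ is used consistently on both sides and $\omega_\xi$ carries the prequantum normalization. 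Since $\xi\mapsto\vol(Q_\xi)$ and $\xi\mapsto\vol(\mathcal{O}_{\xi},\omega_\xi)$ are continuous and homogeneous of degree $m$ on the cone $\mathfrak{t}^*_+$, agreement on the dense set of dominant weights yields the theorem.

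The geometric part (the three paragraphs above through the volume-of-the-leaf formula) is routine; the real work, and the main obstacle, is the bookkeeping of constants in the last step — pinning down the coefficient matrix of $\pi_{PT}$, the lattice normalizing the volume of $Q_\xi$, and the normalization of $\omega_\xi$ — together with the nontrivial structural input that the integral slices of $\mathcal{C}$ are exactly the Berenstein--Kazhdan weight-multiplicity polytopes. It is tempting to instead deduce the identity from $\vol(\mathcal{N}_{s\xi},\eta_{s\xi})=\vol(\mathcal{O}_{\xi},\omega_\xi)$ (which holds for all $s\neq 0$ because the maps in \eqref{main diagram} are Poisson) by letting $s\to-\infty$ along $\mathfrak{L}_s$; but controlling that limit is precisely the content of Sections~\ref{section3}--\ref{section4}, so for the purposes of this paper the argument should go through the combinatorics, as in \cite{ABHL2}.
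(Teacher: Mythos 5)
The paper does not give its own proof of this statement: it cites \cite[Theorem 6.11]{ABHL2} verbatim, and the remark that follows notes only that the extension from regular dominant integral weights to all $\xi\in\mathfrak{t}_+^*$ is by scaling and continuity. Your proposal is a faithful reconstruction of the cited argument and matches that structure exactly: express $\mathcal{P}_\xi$ as a trivial $\mathbb{T}^m$-bundle over a polytope slice $Q_\xi$ of $\mathcal{C}$ so that $\vol(\mathcal{P}_\xi,\nu_\xi)$ is (up to normalization) the Euclidean volume of $Q_\xi$; for integral $\xi$, read $\vol(Q_\xi)$ off the leading Ehrhart coefficient using the fact that the lattice points of the Berenstein--Kazhdan (string) polytope at level $\xi$ enumerate a basis of $V_\xi$; match against Weyl's dimension formula and the Kirillov/Harish-Chandra orbit volume formula; and finally extend by degree-$m$ homogeneity and continuity in $\xi$. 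You also correctly identify the two non-routine inputs — the normalization bookkeeping (coefficient matrix of $\pi_{PT}$, lattice normalizing $\vol(Q_\xi)$, and normalization of $\omega_\xi$) and the structural fact that integral slices of $\mathcal{C}$ are the BK weight-multiplicity polytopes — and you correctly reject as circular, for the purposes of this paper, the alternative of taking $s\to-\infty$ in $\vol(\mathcal{N}_{s\xi},\eta_{s\xi})=\vol(\mathcal{O}_\xi,\omega_\xi)$.
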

\begin{Rmk}
Although \cite[Theorem 6.11]{ABHL2} is only stated for leaves parameterized by regular dominant integral weights, the theorem here follows by scaling and continuity.
\end{Rmk}

In order to compare the Poisson structures of $PT(K^*)$ and $K^*$, we define the \emph{detropicalization map} $\mathfrak{L}_s\colon PT(K^*) \to K^*$ as follows. For $s<0$, let
\begin{equation}
\label{detrop maps}
 \mathfrak{L}_s(\lambda_{\bm{R}},\varphi_{\bm{S}})= \sigma(\mathbf{i})\left( e^{s\lambda_{-r}-\sqrt{-1}\varphi_{-r}},\dots, e^{s\lambda_{m}-\sqrt{-1}\varphi_{m}}\right),
\end{equation}
where we understand $\varphi_k=0$ on the right hand side if $k\notin \bm{S}$. Denote $b_s = \mathfrak{L}_s(\lambda_{\bm{R}},\varphi_{\bm{S}})$.

\begin{Rmk}[Conventions]
We follow the conventions of \cite{ABHL2, BKII} for (partial) tropicalization, which are opposite to those of \cite{ABHL}. We consider $K^*\subset B_-$, as in \cite{ABHL2}, rather than $K^*\subset B$, as in \cite{ABHL}, and take the limit $s\to -\infty$.  This accounts for the minus signs in \eqref{detrop maps}.
\end{Rmk}

The Casimir functions for $K^*$ are related to the coordinates $\lambda_{\bm{R}},\varphi_{\bm{S}}$ by the detropicalization map via $r$ equations (one for each Casimir function):
\begin{equation}\label{master equations}
    \begin{split}
            C_i(b_s)^2 & = \Tr(\rho^{\omega_i}(b_sb_s^*))  = \sum_{j} \rho^{\omega_i}_{j,j}(b_sb_s^*)  = \sum_{j,k}|\rho^{\omega_i}_{j,k}(b_s)|^2 \\
                     & = \sum_{j,k}\bigg\vert\sum_{\bf{i},\bf{j}} c_{\bf{i},\bf{j}}(F_{\bf{i}}\Delta_{w_0\omega_i,\omega_i} F_{\bf{j}})(b_s)\bigg\vert^2 \\
                     & = |\Delta_{w_0\omega_i,\omega_i}(b_s)|^2\left(1 +  \sum_{j,k}\bigg\vert\sum_{\bf{i},\bf{j}}c_{\bf{i},\bf{j}}\frac{ (F_{\bf{i}}\Delta_{w_0\omega_i,\omega_i} F_{\bf{j}})(b_s)}{\Delta_{w_0\omega_i,\omega_i}(b_s)}\bigg\vert^2 \right).\\
    \end{split}
\end{equation}

Since $b_s = \mathfrak{L}_s(\lambda_{\bm{R}},\varphi_{\bm{S}})$, the last line on the right side can be rewritten as a Laurent polynomial in the functions $e^{s\lambda_{k}-\sqrt{-1}\varphi_{k}}$.
The term $|\Delta_{w_0\omega_i,\omega_i}(b_s)|^2 = e^{2s\lambda_{-i}}$ dominates the expression for $s\ll0$, and the exponents in the other terms are controlled by their distance from the boundary of $\mathcal{C}$, as follows.

Recall that $\mathcal{C}$ is the set of points $x\in \mathbb{R}^{m+r}$ satisfying the inequality $\Phi^t(x)>0$. For $\delta>0$, let $\mathcal{C}^\delta\subset \mathcal{C}$ be the set of points $x\in \mathbb{R}^{m+r}$ which satisfy the inequality $\Phi^t> \delta$. Then,

\begin{Pro}\label{prop 2.4}\cite[Theorem 4.13 and Lemma 6.17]{ABHL} For $(\lambda_{\bm{R}},\varphi_{\bm{S}}) \in \mathcal{C}^{\delta}\times\mathbb{T}^m$, each term
\begin{equation*}
    \bigg\vert\sum_{\bf{i},\bf{j}}c_{\bf{i},\bf{j}}\frac{ (F_{\bf{i}}\Delta_{w_0\omega_i,\omega_i} F_{\bf{j}})(b_s)}{\Delta_{w_0\omega_i,\omega_i}(b_s)}\bigg\vert = O(e^{s\delta}).
\end{equation*}

\end{Pro}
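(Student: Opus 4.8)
The plan is to expand each function $F_{\mathbf i}\Delta_{w_0\omega_i,\omega_i}F_{\mathbf j}$ as a Laurent polynomial in the cluster variables $\Delta_k$, evaluate it at $b_s = \mathfrak{L}_s(\lambda_{\bm R},\varphi_{\bm S})$, and then bound the resulting exponential monomials using the tropical potential $\Phi^t$ that cuts out $\mathcal C$. For the algebraic input, recall that $F_{\mathbf i}\Delta_{w_0\omega_i,\omega_i}F_{\mathbf j}$ is a matrix coefficient of the fundamental representation $\rho^{\omega_i}$, hence lies in $\mathbb C[G]$, so its restriction to $G^{w_0,e}$ is a Laurent polynomial in the seed $\{\Delta_k : k\in\bm R\}$ of the upper cluster algebra $\mathbb C[G^{w_0,e}]$ of \cite{BFZ}. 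Moreover $\Delta_{w_0\omega_i,\omega_i}=\Delta_{-i}$ is itself one of these cluster variables (take $k=-i$, so that $i_k = i$ and $v_k=w_0$), and when $(j,k)\neq(n,1)$ the combination $\rho^{\omega_i}_{j,k}=\sum_{\mathbf i,\mathbf j}c_{\mathbf i,\mathbf j}\,F_{\mathbf i}\Delta_{w_0\omega_i,\omega_i}F_{\mathbf j}$ is not a scalar multiple of $\Delta_{w_0\omega_i,\omega_i}$: by \eqref{weightcondition} that would force $\wt(v_j)=w_0\omega_i$ and $\wt(v_k)=\omega_i$, i.e. $(j,k)=(n,1)$, since the extremal weight spaces of $\rho^{\omega_i}$ are one-dimensional. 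Dividing the Laurent polynomial $\rho^{\omega_i}_{j,k}$ by the cluster variable $\Delta_{-i}$ again yields a Laurent polynomial, so I can write
\[
\frac{\rho^{\omega_i}_{j,k}}{\Delta_{w_0\omega_i,\omega_i}}=\sum_{\gamma\in\Gamma}a_\gamma\prod_{k\in\bm R}\Delta_k^{\gamma_k},\qquad \Gamma\subset\mathbb Z^{\bm R}\text{ finite, all }a_\gamma\neq 0 .
\]

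Next I would evaluate at $b_s$. Since $\sigma(\mathbf i)$ inverts $g\mapsto(\Delta_k(g))_{k\in\bm R}$, we have $\Delta_k(b_s)=e^{s\lambda_k-\sqrt{-1}\varphi_k}$ (with $\varphi_k=0$ for $k\notin\bm S$), hence $|\Delta_k(b_s)^{\gamma_k}|=e^{s\gamma_k\lambda_k}$ and
\[
\left|\sum_{\mathbf i,\mathbf j}c_{\mathbf i,\mathbf j}\frac{(F_{\mathbf i}\Delta_{w_0\omega_i,\omega_i}F_{\mathbf j})(b_s)}{\Delta_{w_0\omega_i,\omega_i}(b_s)}\right| =\left|\sum_{\gamma\in\Gamma}a_\gamma\,e^{s\langle\gamma,\lambda\rangle}e^{-\sqrt{-1}\langle\gamma,\varphi\rangle}\right| \leqslant\sum_{\gamma\in\Gamma}|a_\gamma|\,e^{s\langle\gamma,\lambda\rangle},
\]
where $\langle\gamma,\lambda\rangle=\sum_{k\in\bm R}\gamma_k\lambda_k$. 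Everything is now reduced to a lower bound for the linear forms $\langle\gamma,\cdot\rangle$, $\gamma\in\Gamma$, on the cone $\mathcal C$.

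The key claim — which I expect to be the main obstacle, and which is the real content of \cite[Theorem 4.13 and Lemma 6.17]{ABHL} — is that every exponent $\gamma\in\Gamma$ satisfies $\langle\gamma,x\rangle\geqslant\Phi^t(x)$ for all $x\in\mathcal C$; equivalently, the Newton polytope of $\rho^{\omega_i}_{j,k}/\Delta_{w_0\omega_i,\omega_i}$ in the cluster coordinates lies in $\{\gamma:\langle\gamma,\cdot\rangle\geqslant\Phi^t\text{ on }\mathcal C\}$. In particular no $\gamma\in\Gamma$ equals $0$, and each monomial decays along $\mathcal C$ at least as fast as the potential. This is where representation theory must enter: one compares the ratios $\rho^{\omega_i}_{j,k}/\Delta_{w_0\omega_i,\omega_i}$ on the totally positive part of $G^{w_0,e}$ with the Berenstein--Kazhdan potential of \cite{BKII}, whose tropicalization is $\Phi^t$ and whose expression as a sum of elementary monomial functions coming from the geometric crystal structure is exactly what constrains which cluster monomials can occur in the ratios. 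This step is invisible to the elementary cluster bookkeeping of the two previous paragraphs, and I would simply import it from \cite{ABHL}.

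Granting the claim, the argument closes at once. If $(\lambda_{\bm R},\varphi_{\bm S})\in\mathcal C^\delta\times\mathbb T^m$ then $\Phi^t(\lambda)>\delta$, so $\langle\gamma,\lambda\rangle\geqslant\Phi^t(\lambda)>\delta$ for every $\gamma\in\Gamma$; since $s<0$ this gives $e^{s\langle\gamma,\lambda\rangle}\leqslant e^{s\delta}$, and therefore
\[
\left|\sum_{\mathbf i,\mathbf j}c_{\mathbf i,\mathbf j}\frac{(F_{\mathbf i}\Delta_{w_0\omega_i,\omega_i}F_{\mathbf j})(b_s)}{\Delta_{w_0\omega_i,\omega_i}(b_s)}\right|\leqslant\Big(\sum_{\gamma\in\Gamma}|a_\gamma|\Big)e^{s\delta}.
\]
As the constant $\sum_{\gamma\in\Gamma}|a_\gamma|$ is independent of $(\lambda_{\bm R},\varphi_{\bm S})$, this is $O(e^{s\delta})$, in fact uniformly on $\mathcal C^\delta\times\mathbb T^m$, which is the form used in the subsequent volume estimates.
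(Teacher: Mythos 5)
The paper proves Proposition \ref{prop 2.4} purely by citation to \cite{ABHL}, so there is no in-paper argument to compare against; your reconstruction is a correct account of the standard tropicalization estimate and rightly isolates the exponent-dominance inequality $\langle\gamma,\cdot\rangle\geqslant\Phi^t$ on $\mathcal{C}$ as the genuine content that must be imported from \cite{ABHL}. The elementary reductions you carry out---the Laurent expansion of $\rho^{\omega_i}_{j,k}/\Delta_{-i}$ in the cluster seed via the upper-cluster-algebra property of $\mathbb{C}[G^{w_0,e}]$, evaluation at $b_s$ via \eqref{detrop maps}, and the magnitude bound by the slowest-decaying monomial---are all sound and constitute exactly the bookkeeping the cited result is designed to feed.
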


Here and throughout, a function $f(s)$ is in $O(g(s))$, $g(s) \geqslant 0$, if there exists $c >0$ such that 
\[
    -cg(s) \leqslant f(s) \leqslant cg(s).
\]

As a direct consequence of Proposition \ref{prop 2.4} and Equations \eqref{master equations}, we have:

\begin{Cor}\cite[Remark 6.6]{ABHL2}\label{limit corollary}
    For all $\xi\in \mathfrak{t}_+^*$ and $(\lambda_{\bm{R}},\varphi_{\bm{S}}) \in \mathcal{P}_{\xi}$, and for each $i = 1,\ldots, r$,
    \[
        \lim_{s\to -\infty} \frac{1}{s}\log\circ C_i \circ \mathfrak{L}_s(\lambda_{\bm{R}},\varphi_{\bm{S}}) = \lambda_{-i}=(w_0\omega_i,\sqrt{-1}\xi).
    \]
\end{Cor}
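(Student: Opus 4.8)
The goal is to prove Corollary \ref{limit corollary}: for $\xi \in \mathfrak{t}_+^*$ and $(\lambda_{\bm R},\varphi_{\bm S}) \in \mathcal P_\xi$,
\[
\lim_{s\to-\infty} \frac{1}{s}\log C_i(\mathfrak L_s(\lambda_{\bm R},\varphi_{\bm S})) = \lambda_{-i} = (w_0\omega_i,\sqrt{-1}\xi).
\]

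My plan is to take the logarithm of the master equation \eqref{master equations} for $C_i(b_s)^2$, divide by $s$, and pass to the limit term by term, using Proposition \ref{prop 2.4} to control the error.

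First I would fix $(\lambda_{\bm R},\varphi_{\bm S})$ in $\mathcal P_\xi$. Since $\mathcal P_\xi$ is a symplectic leaf of $PT(K^*) = \mathcal C \times \mathbb T^m$ sitting over the value $\lambda_{\bm R^-}$ determined by $\xi$, the point in particular lies in $\mathcal C \times \mathbb T^m$, so $\Phi^t(\lambda_{\bm R}) > 0$; choose $\delta > 0$ with $\Phi^t(\lambda_{\bm R}) > \delta$, i.e. $\lambda_{\bm R} \in \mathcal C^\delta$. Then Proposition \ref{prop 2.4} applies at this point: each term
\[
\Big\vert \sum_{\bf i,\bf j} c_{\bf i,\bf j}\frac{(F_{\bf i}\Delta_{w_0\omega_i,\omega_i}F_{\bf j})(b_s)}{\Delta_{w_0\omega_i,\omega_i}(b_s)}\Big\vert = O(e^{s\delta}),
\]
so the parenthesized factor in the last line of \eqref{master equations},
\[
1 + \sum_{j,k}\Big\vert \sum_{\bf i,\bf j} c_{\bf i,\bf j}\frac{(F_{\bf i}\Delta_{w_0\omega_i,\omega_i}F_{\bf j})(b_s)}{\Delta_{w_0\omega_i,\omega_i}(b_s)}\Big\vert^2,
\]
is a finite sum of $1$ plus terms that are each $O(e^{2s\delta})$. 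As $s\to-\infty$ we have $e^{2s\delta}\to 0$, so this factor converges to $1$, and since it is always $\geqslant 1$ its logarithm is nonnegative and tends to $0$; dividing by $s$ (which $\to -\infty$) kills this contribution. Meanwhile $|\Delta_{w_0\omega_i,\omega_i}(b_s)|^2 = e^{2s\lambda_{-i}}$ exactly, because under $\mathfrak L_s$ the cluster coordinate $\Delta_{-i}$ equals $e^{s\lambda_{-i} - \sqrt{-1}\varphi_{-i}}$ and $\Delta_{w_0\omega_i,\omega_i} = \Delta_{-i}$ for $-i \in \bm R^-$ (it is the minor with $v_{-i} = w_0$), and moreover $-i \in \bm R \setminus \bm S$ so $\varphi_{-i} = 0$ and this quantity is real positive. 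Therefore
\[
\frac{1}{s}\log C_i(b_s)^2 = \frac{1}{s}\Big(2s\lambda_{-i} + \log\big(1 + O(e^{2s\delta})\big)\Big) = 2\lambda_{-i} + \frac{1}{s}\,o(1) \longrightarrow 2\lambda_{-i},
\]
and dividing by $2$ gives $\tfrac1s \log C_i(b_s) \to \lambda_{-i}$, which by the correspondence in Section \ref{section PT} equals $(w_0\omega_i,\sqrt{-1}\xi)$.

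I do not expect a serious obstacle; the corollary is essentially a bookkeeping exercise combining Proposition \ref{prop 2.4} with the defining formula \eqref{detrop maps}. The one point requiring a little care is verifying that a point of $\mathcal P_\xi$ indeed lies strictly inside the cone $\mathcal C$ (so that some $\delta > 0$ works) and identifying $|\Delta_{w_0\omega_i,\omega_i}(b_s)|^2$ with $e^{2s\lambda_{-i}}$ on the nose — i.e. matching the index $-i$ of the cluster variable with the fundamental minor and checking it is real and positive because $-i \in \bm R\setminus\bm S$ and $\varphi_{-i}=0$. Once these identifications are in place, taking $\tfrac1s\log$ of \eqref{master equations} and letting $s \to -\infty$ finishes the proof.
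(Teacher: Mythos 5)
Your proof is correct and takes essentially the same route the paper intends: the paper labels the corollary a ``direct consequence'' of Proposition~\ref{prop 2.4} and the master equations~\eqref{master equations}, and your argument supplies exactly the missing bookkeeping (take $\frac{1}{s}\log$, isolate the dominant factor $e^{2s\lambda_{-i}}$, and use Proposition~\ref{prop 2.4} to see that the parenthesized factor tends to $1$ so its logarithm is killed by the $\frac{1}{s}$).

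One small factual slip, though it is immaterial to the conclusion: you assert that $-i\in\bm R\setminus\bm S$, so that $\varphi_{-i}=0$ and $\Delta_{-i}(b_s)$ is real and positive. In fact $\bm R^-\subset\bm S$: by definition $\bm S=\{k\in\bm R\mid v_k\omega_{i_k}\neq\omega_{i_k}\}$, and for $k\in\bm R^-$ one has $v_k=w_0$, $i_k=-k$, hence $v_k\omega_{i_k}=w_0\omega_{-k}=-\omega_{(-k)^*}\neq\omega_{-k}$ for a semisimple group. The $r$ indices in $\bm R\setminus\bm S$ lie in $\bm R^+$ (the frozen variables, the last occurrences of each letter in the reduced word); this is consistent with $|\bm S|=m$ matching the $\mathbb T^m$ factor. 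However, the identity you actually need, $|\Delta_{w_0\omega_i,\omega_i}(b_s)|^2 = e^{2s\lambda_{-i}}$, holds regardless of $\varphi_{-i}$, since $|e^{s\lambda_{-i}-\sqrt{-1}\varphi_{-i}}|^2 = e^{2s\lambda_{-i}}$ for any real $\varphi_{-i}$ — the phase has unit modulus. So the rest of your argument goes through unchanged.
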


\begin{Rmk} 
 Corollary \ref{limit corollary} says that points $\mathfrak{L}_s(\mathcal{P}_{\xi})$ in the image of a tropical leaf under the detropicalization map approach the corresponding scaled dressing orbit $\mathcal{D}_{s\xi}$ in the limit $s\to -\infty$. It is useful to note that points in $\mathfrak{L}_s(\mathcal{P}_{\xi})$ will concentrate near a certain region of $\mathcal{D}_{s\xi}$, not the entire orbit: there are points in the preimages of the scaled dressing orbits $\mathfrak{L}_s^{-1}(\mathcal{D}_{s\xi})$ that remain far away from $PT(K^*)$, even as $s\to -\infty$ (see Figure \ref{fig 1}).
\end{Rmk}

%%%%%%%%%%%%%%%%%%%%%%%%%%%%%%
\section{Symplectic volumes of the leaves of \texorpdfstring{$\pi_s$}{pis} }\label{section3}

In this section we study volumes of the symplectic leaves of the Poisson bivector 
\[
    \pi_s := (\mathfrak{L}_s)^*(s\pi_{K^*}).
\]
Note that the pullback of a bivector under a diffeomorphism is by definition the pushforward under the inverse diffeomorphism. The symplectic leaves in question are submanifolds of  $\mathbb{R}^{r+m}\times \mathbb{T}^m$. Roughly, for $s\ll 0$ each of these leaves has a piece which lies inside $PT(K^*) = \mathcal{C}\times \mathbb{T}^m$, close to the corresponding leaf of $\pi_{PT}$ (Section \ref{ss 3.1}). For $s\ll 0$, the volume of the symplectic leaves concentrate there (Proposition \ref{prop 3.5}). This is illustrated in Figure \ref{fig 1}.

Let us first establish some notation. Each symplectic leaf of $\pi_s$ is the preimage under $\mathfrak{L}_s$ of a dressing orbit. We denote the leaf and its symplectic form by
\[
    \mathcal{N}_{s\xi} := \mathfrak{L}_s^{-1}(\mathcal{D}_{s\xi}), \qquad \eta_{s\xi} := (\pi_s)^{-1}.
\]
There is a corresponding symplectic leaf $\mathcal{P}_{\xi}$ of $PT(K^*)$ equipped with $\nu_{\xi}$, as described in Section \ref{section PT}. Recall, for $\xi\in \mathfrak{t}_+^*$,
\[
    \mathcal{P}_{\xi} := \left\{(\lambda_{\bf{R}},\varphi_{\bf{S}}) \in PT(K^*) \mid  \lambda_{-i} = ( w_0\omega_i,\sqrt{-1}\xi), \,  i = -r,\ldots, -1 \right\},
\]
which is a product of an open polytope (a fiber in $\mathcal{C}$ of projection to the first $r$ coordinates) times a torus. We will often reference the open subset $\mathcal{P}_{\xi}^{\delta} :=\mathcal{P}_{\xi}\cap(\mathcal{C}^{\delta}\times \mathbb{T}^m)$ and its closure $\overline{\mathcal{P}}_{\xi}^{\delta}$.

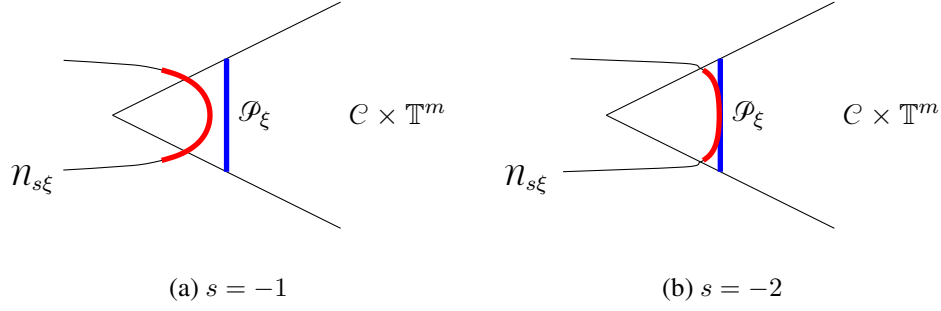
\begin{figure}[htp]
\centering
\begin{minipage}[b]{0.4\columnwidth}
\centering
\[
    \begin{tikzpicture}[baseline={([yshift=-.5ex]current bounding box.center)}, scale=1.5]
		\draw (0,0)--(2,1);
		\draw (0,0)--(2,-1);
		\draw [line width=.7mm, blue](1,-.5)--(1,.5);
        
        \draw[domain=-.97:.97,smooth,variable=\t]plot ({ln(2*cosh(1*2) - 2*cosh(1*2*\t))/(2*1)},\t/2);
        
        \draw[domain=-.8:.8,smooth,variable=\t,line width=.7mm, red]plot ({ln(2*cosh(1*2) - 2*cosh(1*2*\t))/(2*1)},\t/2);
        
        %\draw[domain=-.4:.4,smooth,variable=\t,line width=.4mm]plot ({ln(2*cosh(1*2) - 2*cosh(1*2*\t))/(2*1)},\t/2);
        
        \draw (-.7,-.35)  node[align=left,   below] {$\mathcal{N}_{s\xi}$};
        
        \draw (1.25,0)  node[align=right] {$\mathcal{P}_{\xi}$};
        
        \draw (2.5,0)  node[align=right] {$\mathcal{C}\times \mathbb{T}^m$};
	\end{tikzpicture}
\]
\subcaption{$s = -1$}
\end{minipage}
\begin{minipage}[b]{0.4\columnwidth}
\centering
\[
\begin{tikzpicture}[baseline={([yshift=-.5ex]current bounding box.center)}, scale=1.5]
		\draw (0,0)--(2,1);
		\draw (0,0)--(2,-1);
		\draw [line width=.7mm, blue](1,-.5)--(1,.5);
        
        \draw[domain=-.999:.999,smooth,variable=\t]plot ({ln(2*cosh(2*2) - 2*cosh(2*2*\t))/(2*2)},\t/2);
        
        \draw[domain=-.8:.8,smooth,variable=\t,line width=.7mm, red]plot ({ln(2*cosh(2*2) - 2*cosh(2*2*\t))/(2*2)},\t/2);
        
        %\draw[domain=-.6:.6,smooth,variable=\t,line width=.4mm]plot ({ln(2*cosh(2*2) - 2*cosh(2*2*\t))/(2*2)},\t/2);
        
        \draw (-.7,-.35)  node[align=left,   below] {$\mathcal{N}_{s\xi}$};
        
        \draw (1.25,0)  node[align=right] {$\mathcal{P}_{\xi}$};
        
        \draw (2.5,0)  node[align=right] {$\mathcal{C}\times \mathbb{T}^m$}; 
	\end{tikzpicture}
\]
\subcaption{$s = -2$}
\end{minipage}	
\caption{Volume of the symplectic leaves $\mathcal{N}_{s\xi}$ of $\pi_s$ concentrates on the part of $\mathcal{N}_{s\xi}$ that is close to the corresponding tropical leaf, $\mathcal{P}_{\xi}$.}
\label{fig 1}
\end{figure}

%%%%%%%%%%%%%%%%%%%%%%%%%%%%%%
\subsection{The implicit function theorem argument}\label{ss 3.1}

Consider the map 
\begin{equation}\label{map}
    F_{s\xi} = (f_{-r},\ldots,f_{-1})\colon \mathbb{R}^r\times \mathbb{R}^m \times \mathbb{T}^m \to \mathbb{R}^r
\end{equation}
with coordinates $f_{-i}$ defined by composing the detropicalization map \eqref{detrop maps} with the Casimir functions \eqref{Casimir functions} on $K^*$,
\begin{equation}\label{map coordinates}
    f_{-i}(\lambda_{\bf{R}},\varphi_{\bf{S}}) = \frac{1}{s}\log \circ C_i \circ \mathfrak{L}_s(\lambda_{\bf{R}},\varphi_{\bf{S}}).
\end{equation}
The fiber $F_{s\xi}^{-1}(\lambda_{\bf{R}^-})$ is the symplectic leaf $\mathcal{N}_{s\xi}$. The following lemma will allow us to apply the implicit function theorem at certain points in $\mathcal{N}_{s\xi}$.

\begin{Lem}\label{lem 3.1}
    For all $(\lambda_{\bf{R}},\varphi_{\bf{S}}) \in \mathcal{C}^{\delta}\times \mathbb{T}^m$,
    the derivatives
    \begin{equation}\label{derivatives}
        \begin{split}
            D_{\lambda_{\bm{R}^-}} F_{s\xi} & = I_r + O(e^{2s\delta}); \\
            D_{\lambda_{\bm{R}^+}} F_{s\xi} & =  O(e^{2s\delta}); \\
            D_{\varphi_{\bm{S}}} F_{s\xi} & =  O(e^{2s\delta}). \\
        \end{split}
    \end{equation}
    (Here $I_r$ is the $r\times r$ identity matrix and $O(e^{s\delta})$ denotes a matrix of the appropriate dimensions whose entries are $O(e^{2s\delta})$ as functions of $s$.) 
\end{Lem}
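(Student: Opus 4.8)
The plan is to compute the derivatives of $F_{s\xi}$ directly from the defining relation \eqref{master equations}, treating the right-hand side as a perturbation of its leading term. Starting from $2 s f_{-i} = \log C_i(b_s)^2$, I would differentiate the identity
\[
    C_i(b_s)^2 = e^{2s\lambda_{-i}}\left(1 + R_i(\lambda_{\bm R},\varphi_{\bm S})\right),
\]
where $R_i$ is the Laurent-polynomial remainder appearing on the last line of \eqref{master equations}. Taking $\tfrac{1}{2s}\log$ of both sides gives $f_{-i} = \lambda_{-i} + \tfrac{1}{2s}\log(1+R_i)$, so that
\[
    D f_{-i} = D\lambda_{-i} + \frac{1}{2s}\cdot\frac{D R_i}{1+R_i}.
\]
The first term contributes exactly the identity block in the $\lambda_{\bm R^-}$ directions and zero in the $\lambda_{\bm R^+}$ and $\varphi_{\bm S}$ directions, which accounts for the $I_r$ and the zero matrices in \eqref{derivatives}. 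Everything then reduces to showing that $\tfrac{1}{2s}D R_i/(1+R_i)$ has entries in $O(e^{2s\delta})$ on $\mathcal{C}^\delta\times\mathbb{T}^m$.

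For that, I would invoke Proposition \ref{prop 2.4}: each summand of $R_i$ is a squared modulus of a term that is $O(e^{s\delta})$, hence $R_i$ itself is $O(e^{2s\delta})$, so $1+R_i$ is bounded away from $0$ for $s\ll 0$ and the denominator is harmless. The content is in the numerator $D R_i$. Here I would note that $R_i$, as a function of $(\lambda_{\bm R},\varphi_{\bm S})$, is a Laurent polynomial in the variables $e^{s\lambda_k - \sqrt{-1}\varphi_k}$ whose exponent vectors are precisely the lattice points governed by the tropical potential $\Phi^t$; differentiating in $\lambda_k$ brings down a factor of $s$ times the exponent, and differentiating in $\varphi_k$ brings down $\sqrt{-1}$ times the exponent. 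Thus $D R_i$ is, up to the factor $s$ from the $\lambda$-derivatives, again a sum of monomials each of which is $O(e^{s\delta'})$ for some $\delta' \ge \delta$ by the same estimate as in Proposition \ref{prop 2.4} (the relevant linear functionals are $\ge \Phi^t > \delta$ on $\mathcal{C}^\delta$). The extra factor of $s$ from $\lambda$-differentiation is absorbed: $\tfrac{1}{2s}\cdot s \cdot O(e^{s\delta}) = O(e^{s\delta})$, and one can further absorb a polynomial-in-$s$ loss into $O(e^{2s\delta})$ after shrinking $\delta$ slightly, since $s^N e^{s\delta} = O(e^{2s\delta})$ as $s\to-\infty$ for any fixed $N$. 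I would state the estimates with a cushion (replacing $\delta$ by, say, $\delta/2$ internally) so that the final bound $O(e^{2s\delta})$ is clean, or alternatively phrase Proposition \ref{prop 2.4} so that it already yields $O(e^{2s\delta})$ directly by noting each term is a product of two $O(e^{s\delta})$ factors in \eqref{master equations}.

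The main obstacle I anticipate is bookkeeping rather than conceptual: one must make sure the combinatorial claim "each monomial of $D R_i$ has exponent functional bounded below by a positive multiple of $\Phi^t$" really follows from the construction of $\mathcal{C}$ and $\Phi^t$ in \cite{BKII} and \cite[Theorem 6.24]{ABHL}, and in particular that $\varphi$-differentiation (which does not change exponents) and $\lambda$-differentiation (which does not change the $e^{s\lambda}$-exponents either, only multiplies by a bounded linear factor $s\cdot(\text{exponent})$) do not spoil the estimate of Proposition \ref{prop 2.4}. Since Proposition \ref{prop 2.4} is quoted as already controlling the relevant ratios on $\mathcal{C}^\delta\times\mathbb{T}^m$ uniformly, and differentiation only multiplies the finitely many monomials by bounded (in $\varphi$) or linearly-growing (in $s$) coefficients with unchanged exponents, the uniform bound $O(e^{2s\delta})$ on $\mathcal{C}^\delta\times\mathbb{T}^m$ follows. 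I would close by remarking that the implicit function theorem can then be applied: for $s\ll 0$ the matrix $D_{\lambda_{\bm R^-}}F_{s\xi}$ is invertible on all of $\mathcal{C}^\delta\times\mathbb{T}^m$, which is exactly what is needed in the next subsection to describe $\mathcal{N}_{s\xi}\cap(\mathcal{C}^\delta\times\mathbb{T}^m)$ as a graph over $\mathcal{P}_\xi^\delta$.
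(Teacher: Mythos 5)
Your route is essentially the paper's: differentiate the Casimir identity \eqref{master equations} and feed in Proposition~\ref{prop 2.4}. The only cosmetic difference is that you take $\tfrac{1}{2s}\log$ first to isolate $f_{-i} = \lambda_{-i} + \tfrac{1}{2s}\log(1+R_i)$, whereas the paper differentiates $e^{2sf_{-i}} = e^{2s\lambda_{-i}}(1+R_i)$ directly; both lead to the same chain-rule bookkeeping, and you correctly observe that the factor of $s$ from $\lambda$-differentiation cancels against the $\tfrac{1}{2s}$ prefactor.

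One remark you make is backwards, though, and would not on its own close the estimate. You write that polynomial losses can be absorbed because \(s^N e^{s\delta} = O(e^{2s\delta})\) as \(s\to-\infty\). This is false: as $s\to-\infty$ the quantity $e^{2s\delta}$ decays \emph{faster} than $e^{s\delta}$, so $e^{s\delta}$ (let alone $s^N e^{s\delta}$) is not $O(e^{2s\delta})$; the containment goes the other way. If you only feed in Proposition~\ref{prop 2.4} at the level of the \emph{unsquared} ratio, you get $O(e^{s\delta})$ for the perturbation terms, one power short of what the lemma asserts, and shrinking $\delta$ cannot repair this. The correct point is the one you mention as an alternative at the end: in \eqref{master equations} the perturbation $R_i$ is a sum of \emph{squared moduli}, so each summand is a product of two $O(e^{s\delta})$ factors and is therefore $O(e^{2s\delta})$ outright. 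That doubling is exactly what the paper records by writing $R_i = \sum_{j,k} c_{j,k}\, e^{2sL_{j,k}}$ with $e^{2sL_{j,k}} = O(e^{2s\delta})$ on $\mathcal{C}^\delta\times\mathbb{T}^m$, and it survives differentiation since $\partial L_{j,k}/\partial\lambda_k$ and $\partial L_{j,k}/\partial\varphi_k$ are bounded constants. With that correction your argument matches the paper's proof.
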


\begin{proof} By the formula for $f_{-i}$, Equations \eqref{master equations}, and the comment directly following Equations \eqref{master equations}, 
\begin{equation*}
    \begin{split}
            e^{2sf_{-i}(\lambda_{\bm{R}},\varphi_{\bm{S}})} & = e^{2s\lambda_{-i}}\left(1 +  \sum_{j,k}c_{j,k}e^{2sL_{j,k}(\lambda_{\bm{R}},\varphi_{\bm{S}})}\right).\\
    \end{split}
\end{equation*}
for $-i= -r,\ldots, -1$, constants $c_{j,k}$, and some linear combinations $L_{j,k}(\lambda_{\bm{R}},\varphi_{\bm{S}})$.  Differentiating these equations gives
\begin{equation*}
    \begin{split}
            \frac{\partial f_{-i}}{\partial \lambda_k} & = e^{2s(\lambda_{-i} - f_{-i}(\lambda_{\bm{R}},\varphi_{\bm{S}})) }\left(\delta_{-i,k} +  \sum_{j,k}\left(\frac{\partial L_{j,k}}{\partial \lambda_k} + \delta_{-i,k} \right)c_{j,k}e^{2sL_{j,k}(\lambda_{\bm{R}},\varphi_{\bm{S}})}\right);\\
            \frac{\partial f_{-i}}{\partial \varphi_k} & =  e^{2s(\lambda_{-i} - f_{-i}(\lambda_{\bm{R}},\varphi_{\bm{S}})) }\sum_{j,k}\frac{\partial L_{j,k}}{\partial \varphi_k}c_{j,k}e^{2sL_{j,k}(\lambda_{\bm{R}},\varphi_{\bm{S}})}.\\
    \end{split}
\end{equation*}
Here $\delta_{-i,k}$ is the Kronecker-delta function. By Proposition \ref{prop 2.4}, for $(\lambda_{\bf{R}},\varphi_{\bf{S}}) \in \mathcal{C}^{\delta}\times \mathbb{T}^m$, 
\begin{equation*}
    \begin{split}
        e^{2s(\lambda_{-i} - f_{-i}(\lambda_{\bm{R}},\varphi_{\bm{S}})) } & = 1 + O(e^{2s\delta}); \\
        e^{2sL_{j,k}(\lambda_{\bm{R}},\varphi_{\bm{S}})} & = O(e^{2s\delta}),
    \end{split}
\end{equation*}
which completes the proof.
\end{proof}

%%%%%%%%%%%%%%%%%%%%%%%%%%%%%%

Fix an arbitrary element $p = (\lambda_{\bf{R}^-},\lambda_{\bf{R}^+},\varphi_{\bf{S}}) \in \mathcal{P}_{\xi}$ and consider the subspace 
\[
    \mathcal{S}_p := \mathbb{R}^r \times \{ \lambda_{\bf{R}^+}\} \times \{ \varphi_{\bf{S}}\} \subseteq \mathbb{R}^r \times \mathbb{R}^m \times \mathbb{T}^m.
\]
By an intermediate value theorem argument, we can show that for $s\ll 0$, $\mathcal{N}_{s\xi}$ intersects $\mathcal{S}_p$ near $p$:

\begin{Lem}\label{lem 3.2}
    For all $\xi \in \mathfrak{t}_+^*$ and for all $\delta,\upsilon >0$ sufficiently small, there exists $s_0<0$ such that for all $s\leqslant s_0$ and $p\in \mathcal{P}_{\xi}^{\delta}$, the intersection $\mathcal{S}_p \cap \mathcal{N}_{s\xi} \cap B_{\upsilon}(\mathcal{P}_{\xi})$ is non-empty (see Figure \ref{fig 2}).
\end{Lem}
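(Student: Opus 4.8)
The plan is to restrict the map $F_{s\xi}$ of \eqref{map} to the affine slice $\mathcal{S}_p$ and apply a multidimensional intermediate value theorem. Fix $\xi\in\mathfrak{t}_+^*$, assume $\mathcal{P}_\xi^\delta\neq\emptyset$ (otherwise there is nothing to prove), and write $\lambda^\xi_{-i}:=(w_0\omega_i,\sqrt{-1}\xi)$ for $i=1,\dots,r$, so that $\lambda^\xi_{\bm{R}^-}=(\lambda^\xi_{-r},\dots,\lambda^\xi_{-1})$ is the common value of the coordinates $\lambda_{\bm{R}^-}$ on $\mathcal{P}_\xi$. Let $\mu_\xi(s)\in\mathbb{R}^r$ be the vector with $\mathcal{N}_{s\xi}=F_{s\xi}^{-1}(\mu_\xi(s))$, i.e.\ $\mu_\xi(s)_i=\frac{1}{s}\log C_i(\mathfrak{E}_s(\xi))$; since $C_i(\mathfrak{E}_s(\xi))^2=\Tr\rho^{\omega_i}(\exp(2s\sqrt{-1}\psi(\xi)))$ is a sum of exponentials dominated as $s\to-\infty$ by its lowest-weight term, a direct estimate gives $\mu_\xi(s)\to\lambda^\xi_{\bm{R}^-}$. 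Finally, as the tropical potential $\Phi^t\colon\mathbb{R}^{m+r}\to\mathbb{R}$ is piecewise linear it is Lipschitz, say with constant $L$, and it does not involve the torus coordinates; hence if $\rho>0$ satisfies $\rho<\delta/(2L\sqrt{r})$ and $\rho\sqrt{r}<\upsilon$, then for every $p=(\lambda^\xi_{\bm{R}^-},\lambda^p_{\bm{R}^+},\varphi^p_{\bm{S}})\in\mathcal{P}_\xi^\delta$ the box $Q_p:=\big(\prod_{i=1}^{r}[\lambda^\xi_{-i}-\rho,\lambda^\xi_{-i}+\rho]\big)\times\{\lambda^p_{\bm{R}^+}\}\times\{\varphi^p_{\bm{S}}\}$ is contained in $\mathcal{C}^{\delta/2}\times\mathbb{T}^m$ and in $B_\upsilon(\mathcal{P}_\xi)$.

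Next I would record that $F_{s\xi}$ is uniformly close to the projection $(\lambda_{\bm{R}},\varphi_{\bm{S}})\mapsto\lambda_{\bm{R}^-}$ on $\mathcal{C}^{\delta/2}\times\mathbb{T}^m$. From \eqref{master equations} (compare the proof of Lemma \ref{lem 3.1}) one has $e^{2sf_{-i}(\lambda_{\bm{R}},\varphi_{\bm{S}})}=e^{2s\lambda_{-i}}\big(1+\sum_{j,k}c_{j,k}e^{2sL_{j,k}(\lambda_{\bm{R}},\varphi_{\bm{S}})}\big)$, and by Proposition \ref{prop 2.4} applied with $\delta/2$ in place of $\delta$ the sum is $O(e^{s\delta})$ uniformly on $\mathcal{C}^{\delta/2}\times\mathbb{T}^m$; hence $f_{-i}=\lambda_{-i}+\tfrac{1}{2s}\log\!\big(1+O(e^{s\delta})\big)$ and $\sup_{\mathcal{C}^{\delta/2}\times\mathbb{T}^m}|f_{-i}-\lambda_{-i}|\to0$ as $s\to-\infty$, for each $i$. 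Choose $s_0<0$ so that for all $s\leqslant s_0$ both $\sup_{\mathcal{C}^{\delta/2}\times\mathbb{T}^m}|f_{-i}-\lambda_{-i}|<\rho/4$ and $|\mu_\xi(s)_i-\lambda^\xi_{-i}|<\rho/4$ hold for every $i$; note that this $s_0$ depends only on $\xi,\delta,\upsilon$, not on $p$.

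Now fix $s\leqslant s_0$ and $p\in\mathcal{P}_\xi^\delta$, and consider the continuous map $G\colon\prod_{i=1}^{r}[\lambda^\xi_{-i}-\rho,\lambda^\xi_{-i}+\rho]\to\mathbb{R}^r$ given by $G(\lambda_{\bm{R}^-})=F_{s\xi}(\lambda_{\bm{R}^-},\lambda^p_{\bm{R}^+},\varphi^p_{\bm{S}})-\mu_\xi(s)$, which is well defined because $Q_p\subset\mathcal{C}^{\delta/2}\times\mathbb{T}^m$. By the previous paragraph $G_i(\lambda_{\bm{R}^-})=(\lambda_{-i}-\lambda^\xi_{-i})+e_i$ with $|e_i|<\rho/2$ throughout the box, so $G_i>\rho/2>0$ on the face $\lambda_{-i}=\lambda^\xi_{-i}+\rho$ and $G_i<-\rho/2<0$ on the face $\lambda_{-i}=\lambda^\xi_{-i}-\rho$, for every $i$. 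By the Poincar\'e--Miranda theorem $G$ vanishes at some $\lambda^*_{\bm{R}^-}$ in the box, so $q:=(\lambda^*_{\bm{R}^-},\lambda^p_{\bm{R}^+},\varphi^p_{\bm{S}})$ satisfies $F_{s\xi}(q)=\mu_\xi(s)$, i.e.\ $q\in\mathcal{S}_p\cap\mathcal{N}_{s\xi}$; since $q\in Q_p\subset B_\upsilon(\mathcal{P}_\xi)$, the intersection $\mathcal{S}_p\cap\mathcal{N}_{s\xi}\cap B_\upsilon(\mathcal{P}_\xi)$ is non-empty, which is what we wanted.

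The main obstacle is making all estimates uniform in $p\in\mathcal{P}_\xi^\delta$, and this relies on two facts already in hand: the bound of Proposition \ref{prop 2.4} holds on all of $\mathcal{C}^{\delta'}\times\mathbb{T}^m$ with a constant independent of the point, and $\Phi^t$ is Lipschitz, so a box of a single fixed size centered at each $p\in\mathcal{P}_\xi^\delta$ still lies inside $\mathcal{C}^{\delta/2}\times\mathbb{T}^m$. A secondary point to treat carefully is that $\mathcal{N}_{s\xi}$ is the fiber of $F_{s\xi}$ over the $s$-dependent value $\mu_\xi(s)$ rather than over $\lambda^\xi_{\bm{R}^-}$; this is harmless because $\mu_\xi(s)\to\lambda^\xi_{\bm{R}^-}$, which is precisely why the argument is phrased in terms of $\mu_\xi(s)$.
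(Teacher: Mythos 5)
Your proof is correct and uses essentially the same strategy as the paper — restrict $F_{s\xi}$ to the slice $\mathcal{S}_p$ and locate a zero by the Poincar\'e--Miranda theorem on a small box whose size is controlled by the Lipschitz constant of $\Phi^t$. You are somewhat cleaner on two points the paper glosses over: you explicitly track that $\mathcal{N}_{s\xi}$ is the fiber of $F_{s\xi}$ over the $s$-dependent vector $\mu_\xi(s)=\bigl(\tfrac{1}{s}\log C_i(\mathfrak{E}_s(\xi))\bigr)_i$, which only converges to $\lambda^\xi_{\bm{R}^-}$ as $s\to-\infty$, and you obtain a single $s_0$ uniform in $p$ directly from the uniform estimate of Proposition \ref{prop 2.4} on $\mathcal{C}^{\delta/2}\times\mathbb{T}^m$, rather than via the pointwise-$s_p$-plus-finite-subcover argument the paper uses over $\overline{\mathcal{P}}_{\xi}^{\delta}$.
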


\begin{figure}[htp]
\centering
	\begin{tikzpicture}[baseline={([yshift=-.5ex]current bounding box.center)}, scale=1.6]
	    % intersection
		\fill [blue!30] (1.7,0.5) -- (2.3,.5) -- (2.3,-.5) -- (1.7,-0.5) -- cycle;
		
		% cone
		\draw (0,0)--(4,2);
		\draw (0,0)--(4,-2);
		\draw (4.5,2.1)  node[align=left] {$\mathcal{C}\times \mathbb{T}^m$};
		
		% d/2 cone
		\draw (.5,0)--(4,1.75);
		\draw (.5,0)--(4,-1.75);
		\draw (4.62,1.7)  node[align=left] {$\mathcal{C}^{\delta/2}\times \mathbb{T}^m$};
		
		% tropical leaf 
		\draw (2,-1)--(2,1);
		\draw [line width=.5mm](2,-.5)--(2,.5);
		
		% neighborhood of tropical leaf
		\draw (1.7,-1)--(1.7,1);
		\draw (2.3,-1)--(2.3,1);
		\draw (2.3,1) arc(0:180:.3);
		\draw (2.3,-1) arc(0:-180:.3);
		\draw (2,1.6) node [align=right] {$B_{\upsilon}(\mathcal{P}_{\xi})$};
		
		% S_p and p labels
		\draw (-1,.27)--(4,.27);
		\draw (-1.2,.27)  node {$\mathcal{S}_{p}$};
		\draw (2,.27) node {}[circle,fill,inner sep=.04pt];
		\filldraw[black] (2,.27) circle (1pt);
		\draw (2.1,0) node [align=left,above] {$p$};
		
		% label of P_\xi
		
		\draw [->] (2.55,-.75)--(2.05,-.6);
		\draw (2.7,-.8) node [align=left] {$\mathcal{P}_{\xi}$};
		
		% the leaf N_s
		\draw[domain=-.995:.995,smooth,variable=\t,line width=.2mm]plot ({ln(2*cosh(2) - 2*cosh(2*\t))/2+1},\t);
		\draw (-.7,1)  node[align=left] {$\mathcal{N}_{s\xi}$};
		\filldraw[black] (1.83,.27) circle (1pt);

	\end{tikzpicture}

\caption{The intersection  described in Lemma \ref{lem 3.2}. The intersection of $\mathcal{N}_{s\xi}$ with the shaded region is locally the graph of a function defined on $\mathcal{P}_{\xi}^{\delta}$ (Proposition \ref{prop 3.3}). In the figure, $\mathcal{P}_{\xi}^{\delta}$ is the thick part of $\mathcal{P}_\xi$.}
\label{fig 2}
\end{figure}
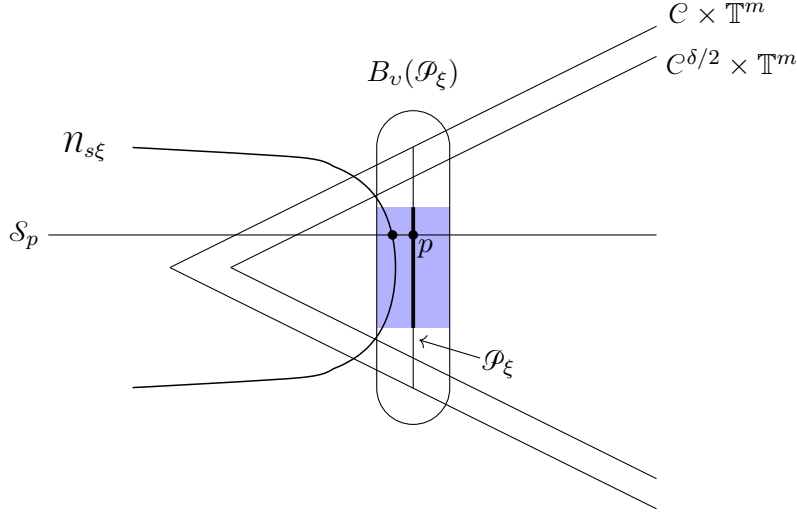

\begin{proof} Consider the equivalent problem of showing there is a $s_0$ such that for all $s\leqslant s_0$ and $p \in \mathcal{P}_{\xi}^{\delta}$, the submanifold $\mathfrak{L}_s(\mathcal{S}_p  \cap B_{\upsilon}(\mathcal{P}_{\xi}))$ intersects the dressing orbit $\mathcal{D}_{s\xi}$. Since dressing orbits are joint level sets of the Casimir functions $C_i$, showing this intersection is non-empty is equivalent to showing that $\lambda_{\bf{R}^-}$ is contained in the image of $\mathcal{S}_p\cap B_{\upsilon}(\mathcal{P}_{\xi})$ under the map $F_{s\xi}$ defined in Equations \eqref{map} and \eqref{map coordinates}.

Fix $\delta>0$ (small enough that $\mathcal{P}_{\xi}^{\delta}$ is nonempty). By Corollary \ref{limit corollary}, for $\varepsilon >0$ sufficiently small,
\[
    \lim_{s\to -\infty}f_{-i}( \lambda_{-r},\ldots, \lambda_{-i}\pm \varepsilon, \ldots, \lambda_{-1},\lambda_{\bf{R}^+},\varphi_{\bf{S}}) =  \lambda_{-i} \pm \varepsilon.
\]
Thus, for all $p \in \overline{\mathcal{P}}_{\xi}^{\delta}$, there is a $s_p$ such that for $s\leqslant s_p$, the map $F_{s\xi}$ satisfies the assumptions of the Poincar\'e-Miranda Theorem on the box
\[
    [\lambda_{-r}-\varepsilon,\lambda_{-r}+\varepsilon]\times \cdots \times [\lambda_{-1}-\varepsilon,\lambda_{-1}+\varepsilon] \times \{\lambda_{\bm{R}^+}\}\times\{\varphi_{\bm{S}}\} \subset \mathcal{S}_p.
\]
Take $\varepsilon>0$ sufficiently small so that the box is contained in $\mathcal{S}_p\cap B_{\upsilon}(\mathcal{P}_{\xi})$ and, without loss of generality (making $\upsilon$ smaller if necessary), assume that  $\mathcal{S}_p\cap B_{\upsilon}(\mathcal{P}_{\xi}) \subset \mathcal{C}^{\delta/2}\times \mathbb{T}^m$ for all $p \in \overline{\mathcal{P}}_{\xi}^{\delta}$.  It follows by the Poincar\'e-Miranda theorem that $\lambda_{\bf{R}^-}$ is contained in the image of the box under the map $F_{s\xi}$ for $s \leqslant s_p$.

By transversality of the intersection of $\mathcal{S}_p$ and $\mathcal{N}_{s\xi}$ at points in $\mathcal{C}^{\delta/2}\times \mathbb{T}^m$, for $s$ less than some $s'$ (Lemma \ref{lem 3.1}), each $p\in \overline{\mathcal{P}}_{\xi}^{\delta}$ has a neighborhood $U_p$ such that for $p' \in U_p$ and $s \leqslant s_p$, the intersection $\mathcal{S}_{p'} \cap \mathcal{N}_{s\xi} \cap B_{\upsilon}(\mathcal{P}_{\xi})$ is non-empty. Passing to a finite subcover $U_{p_k}$, $k = 1,\ldots, n$ and letting $s_0 = \min\{s',s_{p_k}\}$ completes the proof.
\end{proof}

Define 
\[
    \mathcal{U}_{\xi,\delta} := \bigcup_{p\in \mathcal{P}_{\xi}^{\delta}} \mathcal{S}_p.
\]
From this point forward, take $\upsilon>0$ sufficiently small so that $\mathcal{U}_{\xi,\delta} \cap B_{\upsilon}(\mathcal{P}_{\xi}) \subset \mathcal{C}^{\delta/2}\times \mathbb{T}^m$. The region $\mathcal{U}_{\xi,\delta} \cap B_{\upsilon}(\mathcal{P}_{\xi})$ is shaded blue in Figure \ref{fig 2}.

\begin{Pro}\label{prop 3.3}
For all $\delta>0$ and $s\leqslant s_0$ as in Lemma \ref{lem 3.2}, the intersection $\mathcal{N}_{s\xi}\cap \mathcal{U}_{\xi,\delta} \cap B_{\upsilon}(\mathcal{P}_{\xi})$ is locally the graph of a function 
\[
    g_s \colon \mathcal{P}_{\xi}^{\delta} \to \mathbb{R}^r.
\]
\end{Pro}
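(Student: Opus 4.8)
The plan is to apply the implicit function theorem to the map $F_{s\xi}$ of \eqref{map}, using the derivative estimates of Lemma \ref{lem 3.1}, and to use Lemma \ref{lem 3.2} to guarantee a point to apply it at. First I would recall that, by Lemma \ref{lem 3.2}, for $s\leqslant s_0$ and each $p=(\lambda_{\bf{R}^-},\lambda_{\bf{R}^+},\varphi_{\bf{S}})\in \mathcal{P}_{\xi}^{\delta}$, there is at least one point $q\in \mathcal{S}_p\cap \mathcal{N}_{s\xi}\cap B_{\upsilon}(\mathcal{P}_{\xi})$. By our standing choice of $\upsilon$, such a point $q$ lies in $\mathcal{C}^{\delta/2}\times\mathbb{T}^m$, so the estimates of Lemma \ref{lem 3.1} apply at $q$. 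In particular, for $s$ sufficiently negative the block $D_{\lambda_{\bf{R}^-}}F_{s\xi}=I_r+O(e^{s\delta})$ is invertible at $q$; hence $q$ is a regular point of the restriction of $F_{s\xi}$ to a neighborhood, and the implicit function theorem produces a neighborhood of $q$ in $\mathbb{R}^r\times\mathbb{R}^m\times\mathbb{T}^m$ on which the level set $F_{s\xi}^{-1}(\lambda_{\bf{R}^-})=\mathcal{N}_{s\xi}$ is the graph of a smooth function expressing $\lambda_{\bf{R}^-}$ in terms of $(\lambda_{\bf{R}^+},\varphi_{\bf{S}})$.

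Next I would assemble these local graphs into a single function $g_s\colon \mathcal{P}_{\xi}^{\delta}\to\mathbb{R}^r$ by identifying $\mathcal{P}_{\xi}^{\delta}$ (which is an open subset of a fiber $\{\lambda_{\bf{R}^-}=\text{const}\}\subset \mathcal{C}^\delta$, hence naturally an open subset of $\mathbb{R}^m\times\mathbb{T}^m$ via the coordinates $(\lambda_{\bf{R}^+},\varphi_{\bf{S}})$) with the domain of the graph. For a fixed $(\lambda_{\bf{R}^+},\varphi_{\bf{S}})$ the local solutions agree on overlaps by uniqueness in the implicit function theorem, once one knows one is tracking the \emph{same} branch; since the statement only asserts that $\mathcal{N}_{s\xi}\cap\mathcal{U}_{\xi,\delta}\cap B_{\upsilon}(\mathcal{P}_{\xi})$ is \emph{locally} a graph, it actually suffices to produce, for each $q$ as above, the local graph, and to record that the domain of each such graph, pushed to the $(\lambda_{\bf{R}^+},\varphi_{\bf{S}})$-coordinates, is an open subset of $\mathcal{P}_{\xi}^{\delta}$; together these cover the intersection. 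I would phrase $g_s$ accordingly as the function whose graph locally describes the intersection.

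The main obstacle I expect is bookkeeping rather than a genuine difficulty: one must check that the point $q$ supplied by Lemma \ref{lem 3.2} is close enough to $\mathcal{S}_p$ and to $\mathcal{P}_{\xi}$ that the implicit-function neighborhood actually covers the relevant part of $\mathcal{U}_{\xi,\delta}\cap B_{\upsilon}(\mathcal{P}_{\xi})$, and that the constant in $O(e^{s\delta})$ (and hence the threshold $s_0$) can be taken uniform over the compact set $\overline{\mathcal{P}}_{\xi}^{\delta}$ — this uses compactness of $\overline{\mathcal{P}}_{\xi}^{\delta}$ exactly as in the finite-subcover step of Lemma \ref{lem 3.2}. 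A secondary point to handle carefully is that the torus directions $\varphi_{\bf{S}}$ are among the ``free'' variables of the graph, so the implicit function theorem is being applied on the manifold $\mathbb{R}^{r}\times(\mathbb{R}^{m}\times\mathbb{T}^{m})$; since the statement is local this causes no trouble, but I would note it explicitly. With these points addressed, setting $s_0$ (shrinking it further from the value in Lemma \ref{lem 3.2} if the invertibility of $I_r+O(e^{s\delta})$ requires it) completes the argument.
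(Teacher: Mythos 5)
Your proposal is correct and follows essentially the same route as the paper, which dispatches Proposition~\ref{prop 3.3} in one line by combining Lemmas~\ref{lem 3.1}, \ref{lem 3.2}, and the implicit function theorem; you have simply spelled out the details of that combination.
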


\begin{proof} Combine Lemmas \ref{lem 3.1}, \ref{lem 3.2}, and the implicit function theorem.  
\end{proof}

%%%%%%%%%%%%%%%%%%%%%%%%%%%%%%
\subsection{Comparing symplectic volumes on the leaves of \texorpdfstring{$\pi_s$}{pis}}\label{ss 3.2}

In this subsection, we compare the symplectic volumes of $(\mathcal{P}_{\xi},\nu_{\xi})$ and $(\mathcal{N}_{s\xi},\eta_{s\xi})$. By Proposition \ref{prop 3.3}, the intersection of $\mathcal{N}_{s\xi}$ with $ \mathcal{U}_{\xi,\delta} \cap B_{\upsilon}(\mathcal{P}_{\xi})$ is locally the graph of a function $g_s$. i.e. locally  there is a diffeomorphism 
\[
    G_s\colon \mathcal{P}_{\xi}^{\delta} \to \mathcal{N}_{s\xi},\, \left(\lambda_{\bf{R}},\varphi_{\bf{S}}) \mapsto (g_s(\lambda_{\bf{R}^+},\varphi_{\bf{S}}),\lambda_{\bf{R}^+},\varphi_{\bf{S}}\right)
\]

\begin{Lem}\label{lem 3.4} For $s\leqslant s_0$ as in Lemma \ref{lem 3.2}, at points in $\mathcal{N}_{s\xi}\cap  \mathcal{U}_{\xi,\delta} \cap B_{\upsilon}(\mathcal{P}_{\xi}) \subset \mathcal{C}^{\delta/2}\times \mathbb{T}^m$, 
	\[
		(G_s)_*\nu_{\xi} = \eta_{s\xi}+ O(e^{s\delta })	\]
(here $O(e^{s\delta})$ denotes a 2-form whose coefficients in  coordinates $(\lambda_{\bf{R}},\varphi_{\bf{S}})$ are $O(e^{s\delta})$ as functions of $s$).
\end{Lem}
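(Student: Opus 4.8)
The plan is to compare the symplectic form $\eta_{s\xi} = (\pi_s)^{-1}$ on $\mathcal{N}_{s\xi}$ with the pushforward $(G_s)_*\nu_\xi$ of the constant form on the tropical leaf, at points lying in $\mathcal{N}_{s\xi}\cap \mathcal{U}_{\xi,\delta}\cap B_\upsilon(\mathcal{P}_\xi) \subset \mathcal{C}^{\delta/2}\times \mathbb{T}^m$. The key input is that $\pi_s = \mathfrak{L}_s^*(s\pi_{K^*})$ converges to $\pi_{PT}$ as $s\to-\infty$, uniformly on $\mathcal{C}^{\delta/2}\times\mathbb{T}^m$, with an explicit $O(e^{s\delta'})$ rate for $\delta' = \delta/2$ (this is the content of the partial-tropicalization estimates of \cite{ABHL}, specifically Proposition \ref{prop 2.4} and the description of $\pi_{PT}$ as the constant limit; one can read off that the coefficients of $\pi_s - \pi_{PT}$ in the $(\lambda_{\bm{R}},\varphi_{\bm{S}})$ coordinates are $O(e^{s\delta/2})$, and relabelling $\delta$ absorbs the factor $1/2$). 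So the first step is to record that $\pi_s = \pi_{PT} + O(e^{s\delta})$ on $\mathcal{C}^{\delta}\times\mathbb{T}^m$, as bivectors in the fixed linear coordinates.

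Next I would pass from bivectors to the symplectic forms on the leaves. Both $\mathcal{N}_{s\xi}$ and $\mathcal{P}_\xi$ are the fibers of coordinate-type maps ($F_{s\xi}$ and the linear projection $\lambda_{\bm{R}^-}$ respectively) whose differentials, by Lemma \ref{lem 3.1}, agree up to $O(e^{2s\delta})$; the leaf $\mathcal{N}_{s\xi}$ is locally the graph $G_s$ of $g_s$, and by the implicit function theorem applied to the estimates of Lemma \ref{lem 3.1} the derivatives of $g_s$ are themselves $O(e^{2s\delta})$, so $G_s$ is $C^1$-close to the inclusion $\mathcal{P}_\xi \hookrightarrow \mathbb{R}^r\times\mathbb{R}^m\times\mathbb{T}^m$ with the same rate. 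Restricting a bivector to a leaf and inverting is a smooth (rational) operation in the bivector's coefficients and in the first derivatives of the defining functions, away from degeneracy; since $\pi_{PT}$ is nondegenerate on $\mathcal{P}_\xi$ and the perturbations are exponentially small, the inverse $\eta_{s\xi}$ of $\pi_s\vert_{\mathcal{N}_{s\xi}}$, transported back by $G_s$, equals the inverse $\nu_\xi$ of $\pi_{PT}\vert_{\mathcal{P}_\xi}$ up to $O(e^{s\delta})$. Equivalently, $(G_s)_*\nu_\xi = \eta_{s\xi} + O(e^{s\delta})$ in the $(\lambda_{\bm{R}},\varphi_{\bm{S}})$ coordinates, which is the claim (after the usual harmless relabelling of $\delta$ by a constant multiple, since all estimates are on $\mathcal{C}^{\delta/2}\times\mathbb{T}^m$).

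The main obstacle is bookkeeping rather than anything deep: one must track how the $O(e^{s\delta})$ errors in the \emph{ambient} Poisson bivector and the $O(e^{2s\delta})$ errors in the \emph{graphing function} $g_s$ combine under the two non-linear operations involved — restricting a Poisson structure to a symplectic leaf and then inverting the resulting nondegenerate bivector — and to check that no cancellation of the leading constant term occurs, i.e. that $\pi_{PT}$ really is invertible on each $\mathcal{P}_\xi$ with inverse bounded uniformly on the relevant compact pieces, so that the inversion map is Lipschitz there. Once that uniform nondegeneracy is in hand (it follows from $(\mathcal{P}_\xi,\nu_\xi)$ being a genuine symplectic leaf, Section \ref{section PT}), the estimate propagates linearly and the lemma follows. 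I would also remark that $\upsilon$ has already been fixed (just before Proposition \ref{prop 3.3}) so that $\mathcal{U}_{\xi,\delta}\cap B_\upsilon(\mathcal{P}_\xi)\subset \mathcal{C}^{\delta/2}\times\mathbb{T}^m$, which is exactly what lets us invoke Proposition \ref{prop 2.4} and Lemma \ref{lem 3.1} with exponent $\delta/2$ throughout.
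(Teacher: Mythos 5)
Your proof follows essentially the same route as the paper's: combine (i) $C^1$-closeness of $G_s$ to the inclusion via Lemma \ref{lem 3.1} and the implicit function theorem, (ii) the convergence $\pi_s = \pi_{PT} + O(e^{s\delta})$ on $\mathcal{C}^{\delta/2}\times\mathbb{T}^m$, and (iii) stability of restriction and inversion of nondegenerate bivectors under exponentially small perturbations. The only slight imprecision is sourcing: the estimate $\pi_s = \pi_{PT} + O(e^{s\delta})$ does not really ``read off'' from Proposition \ref{prop 2.4} (which controls only the Casimir terms); the paper instead invokes the proof of \cite[Theorem 6.18]{ABHL}, which is where the bivector-level estimate actually lives. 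With that citation corrected, your argument matches the paper's.
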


\begin{proof} Fix $p = (\lambda_{\bf{R}},\varphi_{\bf{S}}) \in \mathcal{P}_{\xi}^{\delta}$. By the implicit function theorem, for all $(X,Y)  \in T_p\mathcal{P}_{\xi}^{\delta} = \mathbb{R}^m \times \mathbb{R}^m$,
\[
        D_pG_s(X,Y) = \left(-(D_{\lambda_{\bf{R}^-}}F_{s\xi} )^{-1}(D_{\lambda_{\bf{R}^+}}F_{s\xi} X +D_{\varphi_{\bf{S}}} F_{s\xi}Y),X,Y\right)
\]
The constant bivector $\pi_{PT}$ has the form 
\[
    \pi_{PT} = \sum_{k} X_k\wedge Y_k
\]
for some $X_k, Y_k \in T_p\mathcal{P}_{\xi}^{\delta}$.
By Lemma \ref{lem 3.1} and the formula for $D_pG_s$ above, we find $(G_s)_*\pi_{PT} = \pi_{PT} + O(e^{s \delta}),$ where $O(e^{s \delta})$ denotes a bivector whose coefficients in coordinates $(\lambda_{\bf{R}},\varphi_{\bf{S}})$ are $O(e^{s \delta})$ as functions of $s$. The 2-form  \[
(G_s)_*\nu_{\xi} = ((G_s)_*\pi_{PT})^{-1} = \pi_{PT}^{-1} + O(e^{s\delta}).
\]
On the other hand, by the proof of \cite[Theorem 6.18]{ABHL}, at $G_s(p) \in  \mathcal{C}^{\delta/2}\times \mathbb{T}^m$,
\[
    \eta_{s\xi} = (\pi_s)^{-1} = \left(\pi_{PT}+ O(e^{s\delta})\right)^{-1} =  \pi_{PT}^{-1} + O(e^{s\delta}). \tag*{\qedhere}
\]
\end{proof}

Finally, we show that for $s\ll 0$, the symplectic volume of $\mathcal{N}_{s\xi}$ is concentrated on the piece that lies in $\mathcal{C}^{\delta/2}\times \mathbb{T}^m$. 

\begin{Pro}\label{prop 3.5}
For $\xi$, $\delta$ ,$\upsilon$, and $s\leqslant s_0$ as in Lemma \ref{lem 3.2}, the symplectic volume of  $\mathcal{N}_{s\xi}\cap \mathcal{U}_{\xi,\delta} \cap B_{\upsilon}(\mathcal{P}_{\xi}) \subset \mathcal{C}^{\delta/2}\times \mathbb{T}^m$ satisfies the inequalities
\[
    \vol(\mathcal{N}_{s\xi} , \eta_{s\xi}) \geqslant \vol(\mathcal{N}_{s\xi}\cap \mathcal{U}_{\xi,\delta} \cap B_{\upsilon}(\mathcal{P}_{\xi}) , \eta_{s\xi}) \geqslant \vol(\mathcal{N}_{s\xi} , \eta_{s\xi}) - \vol(\mathcal{P}_{\xi}\setminus\mathcal{P}_{\xi}^{\delta}  , \nu_{\xi}) + O(e^{\delta s}).
\] 
\end{Pro}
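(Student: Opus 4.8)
The plan is to deduce the inequalities from the local graph description of $\mathcal{N}_{s\xi}$ (Proposition \ref{prop 3.3}) together with the comparison of symplectic forms (Lemma \ref{lem 3.4}), and from the volume identities relating $\mathcal{P}_\xi$, $\mathcal{N}_{s\xi}$, and $\mathcal{O}_\xi$. The left inequality is immediate: $\mathcal{N}_{s\xi}\cap \mathcal{U}_{\xi,\delta}\cap B_\upsilon(\mathcal{P}_\xi)$ is an open subset of $\mathcal{N}_{s\xi}$, and symplectic (Liouville) volume of an open subset is at most the volume of the whole leaf. So the content is the right inequality. For this I would first use Proposition \ref{prop 3.3} to write $\mathcal{N}_{s\xi}\cap \mathcal{U}_{\xi,\delta}\cap B_\upsilon(\mathcal{P}_\xi)$ as the image of $\mathcal{P}_\xi^\delta$ under the diffeomorphism $G_s$ (one must check that the local graph is in fact a global graph over $\mathcal{P}_\xi^\delta$, which follows because $\mathcal{S}_p$ meets $\mathcal{N}_{s\xi}$ transversally and near $p$ for each $p\in\mathcal{P}_\xi^\delta$, by Lemmas \ref{lem 3.1} and \ref{lem 3.2}, so $g_s$ is globally single-valued on $\mathcal{P}_\xi^\delta$ for $s\leqslant s_0$). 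Then change variables:
\[
    \vol\!\left(\mathcal{N}_{s\xi}\cap \mathcal{U}_{\xi,\delta}\cap B_\upsilon(\mathcal{P}_\xi),\eta_{s\xi}\right) = \int_{\mathcal{P}_\xi^\delta} G_s^*\!\left(\frac{\eta_{s\xi}^m}{m!}\right) = \int_{\mathcal{P}_\xi^\delta} \frac{\left((G_s^*\eta_{s\xi})^{\wedge m}\right)}{m!}.
\]

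Next I would invoke Lemma \ref{lem 3.4}, which gives $G_s^*\eta_{s\xi} = (G_s)^*\big((G_s)_*\nu_\xi\big) + O(e^{s\delta}) = \nu_\xi + O(e^{s\delta})$ in the $(\lambda_{\bm R},\varphi_{\bm S})$ coordinates (note the pullback of $(G_s)_*\nu_\xi$ under $G_s$ is exactly $\nu_\xi$). Taking the $m$-th exterior power, the Liouville density of $G_s^*\eta_{s\xi}$ on $\mathcal{P}_\xi^\delta$ equals the Liouville density of $\nu_\xi$ plus a term which is $O(e^{s\delta})$ uniformly on the closure $\overline{\mathcal{P}}_\xi^\delta$ (this uniformity is exactly what Lemma \ref{lem 3.4} provides, since the $O(e^{s\delta})$ estimates from Proposition \ref{prop 2.4} are uniform on $\mathcal{C}^{\delta/2}\times\mathbb{T}^m$, and $\overline{\mathcal{P}}_\xi^\delta$ is compact). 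Integrating over $\mathcal{P}_\xi^\delta$, whose Euclidean volume is finite, yields
\[
    \vol\!\left(\mathcal{N}_{s\xi}\cap \mathcal{U}_{\xi,\delta}\cap B_\upsilon(\mathcal{P}_\xi),\eta_{s\xi}\right) = \vol\!\left(\mathcal{P}_\xi^\delta,\nu_\xi\right) + O(e^{s\delta}).
\]

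Finally I would combine this with the volume bookkeeping. By definition $\vol(\mathcal{P}_\xi,\nu_\xi) = \vol(\mathcal{P}_\xi^\delta,\nu_\xi) + \vol(\mathcal{P}_\xi\setminus\mathcal{P}_\xi^\delta,\nu_\xi)$, and by Theorem \ref{theoremcomparevolume} together with \eqref{equal_volumes} (and the fact that $\mathfrak{L}_s$ is a Poisson isomorphism onto its image, so $\vol(\mathcal{N}_{s\xi},\eta_{s\xi}) = \vol(\mathcal{D}_{s\xi}, \ldots) = \vol(\mathcal{O}_\xi,\omega_\xi) = \vol(\mathcal{P}_\xi,\nu_\xi)$), we get $\vol(\mathcal{P}_\xi^\delta,\nu_\xi) = \vol(\mathcal{N}_{s\xi},\eta_{s\xi}) - \vol(\mathcal{P}_\xi\setminus\mathcal{P}_\xi^\delta,\nu_\xi)$. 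Substituting gives precisely the claimed right-hand inequality (in fact an equality up to $O(e^{s\delta})$, but stated as an inequality since the error term can be absorbed). The main obstacle I anticipate is the uniformity of the $O(e^{s\delta})$ error when integrated: one must be careful that Lemma \ref{lem 3.4}'s estimate is uniform over all of $\mathcal{P}_\xi^\delta$ and not just pointwise, and that the domain $\mathcal{P}_\xi^\delta$ (which is independent of $s$) has finite Euclidean volume so that integrating an $O(e^{s\delta})$ density still yields $O(e^{s\delta})$; both hold but deserve an explicit sentence. A secondary point to verify carefully is that $G_s$ is a genuine diffeomorphism onto $\mathcal{N}_{s\xi}\cap \mathcal{U}_{\xi,\delta}\cap B_\upsilon(\mathcal{P}_\xi)$ — i.e. that this set is exactly the graph, with no extra components of $\mathcal{N}_{s\xi}$ sneaking into $\mathcal{U}_{\xi,\delta}\cap B_\upsilon(\mathcal{P}_\xi)$ — which again follows from the transversality in Lemma \ref{lem 3.1} and the box argument in Lemma \ref{lem 3.2}, for $\upsilon$ small enough.
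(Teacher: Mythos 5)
Your proposal is correct and follows essentially the same route as the paper: the left inequality by monotonicity, and the right by combining Proposition~\ref{prop 3.3} and Lemma~\ref{lem 3.4} to get $\vol(\mathcal{N}_{s\xi}\cap\mathcal{U}_{\xi,\delta}\cap B_\upsilon(\mathcal{P}_\xi),\eta_{s\xi})\geqslant\vol(\mathcal{P}_\xi^\delta,\nu_\xi)+O(e^{s\delta})$, then using the $\mathcal{P}_\xi^\delta$ decomposition and Theorem~\ref{theoremcomparevolume}. You spell out a few steps the paper compresses --- the chain $\vol(\mathcal{P}_\xi,\nu_\xi)=\vol(\mathcal{O}_\xi,\omega_\xi)=\vol(\mathcal{N}_{s\xi},\eta_{s\xi})$ via \eqref{equal_volumes} and the Poisson isomorphism, the uniformity of the $O(e^{s\delta})$ error over the compact closure, and the global single-valuedness of $g_s$ --- but the argument is the same.
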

Note that $ \vol(\mathcal{P}_{\xi}\setminus\mathcal{P}_{\xi}^{\delta}  , \nu_{\xi})\to 0$ as $\delta \to 0$. 

\begin{Rmk} In the proof of Theorem \ref{maintheorem}, we choose $\delta, \upsilon >0$ sufficiently small and let  $D_\varepsilon$ be the closure of $ \mathcal{U}_{\xi,\delta} \cap B_{\upsilon}(\mathcal{P}_{\xi}) \subseteq \mathcal{C}^{\delta/2}\times \mathbb{T}^m$.
\end{Rmk}

\begin{proof}
The first inequality follows since volume is monotonic. By Proposition \ref{prop 3.3} and Lemma \ref{lem 3.4}, $\mathcal{N}_{s\xi}\cap\mathcal{U}_{\xi,\delta} \cap B_{\upsilon}(\mathcal{P}_{\xi})$ is locally the image of a diffeomorphism $G_s$ with domain in $\mathcal{P}_{\xi}^{\delta}$ and  $(G_s)_*\nu_{\xi} = \eta_{s\xi}+ O(e^{s\delta })$, so 
  \[
	\vol(\mathcal{N}_{s\xi}\cap\mathcal{U}_{\xi,\delta} \cap B_{\upsilon}(\mathcal{P}_{\xi}), \eta_{s\xi}) \geqslant  \vol(\mathcal{P}_{\xi}^{\delta} , \nu_{\xi})  + O(e^{s\delta}).
\]
By definition of $\mathcal{P}_{\xi}^{\delta} = \mathcal{P}_{\xi} \cap (\mathcal{C}^{\delta}\times \mathbb{T}^m)$,
\[
  \vol(\mathcal{P}_{\xi}^{\delta} , \nu_{\xi}) = \vol(\mathcal{P}_{\xi} , \nu_{\xi})  - \vol(\mathcal{P}_{\xi}\setminus\mathcal{P}_{\xi}^{\delta}  , \nu_{\xi}). 
\]
Finally, by Theorem \ref{theoremcomparevolume},
\[
	\vol(\mathcal{P}_{\xi} , \nu_{\xi})  - \vol(\mathcal{P}_{\xi}\setminus\mathcal{P}_{\xi}^{\delta}  , \nu_{\xi}) + O(e^{s\delta/2})  = \vol(\mathcal{N}_{s\xi} , \eta_{\xi}) - \vol(\mathcal{P}_{\xi}\setminus\mathcal{P}_{\xi}^{\delta}  , \nu_{\xi})  + O(e^{s\delta}).  \tag*{\qedhere}
\]
\end{proof}

%%%%%%%%%%%%%%%%%%%%%%%%%
\section{Preimages of points in \texorpdfstring{$PT(K^*)$}{PT(K*)}}\label{section4}

The goal of this section is to show that for a fixed value of $\xi \in \mathfrak{t}_+^*$ and $s\ll0$, if $\mathfrak{E}_s(\Ad_k^* \xi)\in \mathfrak{L}_s(PT(K^*))$, then $\Ad_k^* \xi$ must be close to $ \xi$ in the coadjoint orbit $\mathcal{O}_\xi$. 

Fix a faithful irreducible representation $(\rho, V)$ of $G$. Let $n=\dim (V)$, and fix a Hermitian inner product on $V$ which is preserved by $\rho(K)$. For the representation $V$, fix a unitary weight basis $v_1,\dots, v_n$.  Consider the wedge product $(\rho^l,\wedge^l V)$ of the representation $(\rho, V)$. Note that $\wedge^l V$ has basis
  \[
    \{v_{\bf{I}}:=v_{i_1}\wedge \cdots \wedge v_{i_l} \mid {\bf I} =(i_1,\dots,i_l) \text{~and~} i_1<\cdots<i_l\}.
  \]
  We can reorder the unitary weight basis $\{v_i\}$ so that, for all $l\in [n]$, the vector $v_{[l]}=v_1\wedge \cdots \wedge v_l$ is a minimal weight vector of $\wedge^l V$. For ${\bf I},{\bf J}\subset [n]$ with $|{\bf I}|=|{\bf J}|=l$ denote by $\Delta_{{\bf I},{\bf J}}$ the $l\times l$ minor of elements of $\GL(V)$ in the basis $v_i$, with rows ${\bf I}$ and columns ${\bf J}$.
Define the map 
\[
    \pr_{\mathfrak{t}^*} \colon PT(K^*)\to \mathfrak{t}^*;\qquad x\in \mathcal{P}_\xi\mapsto \xi.
\]

\begin{Lem} \label{mainlemma}
  Let $l\in [n]$, and let ${\bf J}\subset [n]$ with $|{\bf J}|=l$ and $[l]\ne {\bf J}$. For all $\delta >0$ and $s< 0$, define 
\[
    U_s = \{ k \in K \mid \mathfrak{E}_s(\Ad^*_k\xi) = \mathfrak{L}_s(p) \mbox{ for some } p \in \mathcal{C}^\delta\times \mathbb{T}^m, \, \xi \in \pr_{\mathfrak{t}^*}(\mathcal{C}^\delta\times \mathbb{T}^m) \}.
\] 
Then there exists $a>0$ such that for all $k \in U_s$, 
\[
    |\Delta_{[l],{\bf J}}(\rho(k))| \leqslant a e^{s\delta},
\]
in the unitary weight basis $\{v_i\}$.
\end{Lem}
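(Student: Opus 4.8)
The plan is to express the minor $\Delta_{[l],{\bf J}}(\rho(k))$ in terms of the matrix entries of $\rho^l(\mathfrak{E}_s(\Ad^*_k\xi)) = \rho^l(\mathfrak{L}_s(p))$ and then estimate those entries using the control on the off-diagonal terms in Equations \eqref{master equations} provided by Proposition \ref{prop 2.4}. First I would recall that, by $K$-equivariance of $\mathfrak{E}_s$, if $b_s := \mathfrak{E}_s(\Ad^*_k\xi) = {}^k\mathfrak{E}_s(\xi)$, then writing the dressing action as a refactorization $k\exp(s\sqrt{-1}\psi(\xi)) = b_s k'$ in $AN_-K$, one gets $\rho(b_s) = \rho(k)\rho(\exp(s\sqrt{-1}\psi(\xi)))\rho(k')^{-1}$. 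The key observation is that $v_{[l]}$ is a minimal (lowest) weight vector of $\wedge^l V$, so $\rho^l(k')^{-1} v_{[l]}$ is a unit vector, and the diagonal entry $\langle \rho^l(\exp(s\sqrt{-1}\psi(\xi))) v_{[l]}, v_{[l]}\rangle$ equals $e^{2s\lambda_{[l]}}$ for the corresponding weight, while the matrix $\rho^l(\exp(s\sqrt{-1}\psi(\xi)))$ is diagonal with entries decaying relative to $e^{2s\lambda_{[l]}}$ by amounts controlled by the positions of the weights — exactly the structure exploited in \eqref{master equations}. Then $\Delta_{[l],{\bf J}}(\rho(k)) = \langle \rho^l(k) v_{\bf J}, v_{[l]}\rangle$, and one can read off its size from $\langle \rho^l(b_s) v_{\bf J}, v_{[l]}\rangle$ together with the decay.

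The concrete steps I would carry out: (1) identify $\Delta_{[l],{\bf J}}(\rho(k))$ with a matrix coefficient of $\rho^l$ in the unitary weight basis; (2) use the factorization $\rho^l(b_s) = \rho^l(k)\,\rho^l(\exp(s\sqrt{-1}\psi(\xi)))\,\rho^l(k')^{-1}$ and the fact that $\rho^l(\exp(s\sqrt{-1}\psi(\xi)))$ is diagonal, to write $\Delta_{[l],{\bf J}}(\rho(k))$ as a linear combination (with $s$-dependent exponential coefficients $e^{s(\lambda_{[l]}-\mu)}$, $\mu$ a weight of $\wedge^l V$) of the entries $\langle \rho^l(b_s)v_{\bf I}, v_{[l]}\rangle$; (3) observe that the entry with ${\bf I}=[l]$ is, up to the constant of proportionality relating $\rho^l_{[l],[l]}$ to $\Delta_{w_0(\cdot),(\cdot)}$, precisely $\Delta_{w_0\mu_l,\mu_l}(b_s)$ for the relevant minuscule-type weight $\mu_l$, whereas the other entries $\langle \rho^l(b_s)v_{\bf I}, v_{[l]}\rangle$ with ${\bf I}\ne [l]$ are of the form $F_{\bf i}\Delta_{w_0\mu_l,\mu_l}F_{\bf j}(b_s)$ appearing in \eqref{master equations}; (4) since $[l]\ne {\bf J}$, the diagonal term $\langle \rho^l(b_s)v_{[l]}, v_{[l]}\rangle$ does \emph{not} contribute to $\Delta_{[l],{\bf J}}$, so every surviving term carries either a ratio $|F_{\bf i}\Delta_{w_0\mu_l,\mu_l}F_{\bf j}(b_s)/\Delta_{w_0\mu_l,\mu_l}(b_s)| = O(e^{s\delta})$ by Proposition \ref{prop 2.4} (using $p\in\mathcal{C}^\delta\times\mathbb{T}^m$), or an exponential factor $e^{s(\lambda_{[l]}-\mu)}$ with $\lambda_{[l]}-\mu > 0$ (since $v_{[l]}$ is the minimal weight vector, so $\mu$ lies strictly above $\lambda_{[l]}$ whenever the weights differ, and with $\xi$ ranging over the compact set $\pr_{\mathfrak{t}^*}(\mathcal{C}^\delta\times\mathbb{T}^m)$ one gets a uniform lower bound $\lambda_{[l]}-\mu \geqslant \delta'>0$); (5) combine, absorbing all the bounded factors and finitely many constants into a single constant $a$, and, if $\delta' < \delta$, shrink whichever exponent is needed (the statement only claims $O(e^{s\delta})$, so it suffices to bound by $\min(\delta,\delta')$ — alternatively one checks the geometry forces $\lambda_{[l]}-\mu \geq \delta$ directly from the potential inequality). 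Finally, unitarity of $\rho^l(k')$ and $\rho^l(k)$ keeps all norms under control and shows the constant $a$ can be chosen uniformly in $k\in U_s$.

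The main obstacle I anticipate is step (3)–(4): matching the matrix coefficients $\langle \rho^l(b_s)v_{\bf I}, v_{[l]}\rangle$ of the auxiliary representation $\wedge^l V$ with the specific generalized-minor expressions $F_{\bf i}\Delta_{w_0\mu_l,\mu_l}F_{\bf j}(b_s)$ that Proposition \ref{prop 2.4} controls, and verifying that the relevant weight $\mu_l$ (the one for which $v_{[l]}$ is minimal in $\wedge^l V$) is of the form whose lowest-weight matrix entry really is a power/product of the $\Delta_{w_0\omega_i,\omega_i}$ as in \eqref{deltadef}, so that \eqref{master equations} applies verbatim. One must also be careful that $b_s = \mathfrak{E}_s(\Ad^*_k\xi)$ genuinely lies in $K^* = AN_-$ so that these minors are the real positive quantities used throughout, and that the exponent comparison $\lambda_{[l]} - \mu \geqslant \delta$ (rather than merely $>0$) is what the cone condition $\mathcal{C}^\delta$ buys us — tracing through the definition of $\Phi^t$ and the tropical inequalities should confirm this, but it is the step that genuinely uses the hypothesis $p\in\mathcal{C}^\delta\times\mathbb{T}^m$ rather than just $p\in PT(K^*)$.
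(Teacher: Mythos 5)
The central step of your plan---extracting $\Delta_{[l],{\bf J}}(\rho(k))=\rho^l_{[l],{\bf J}}(k)$ directly from the Iwasawa factorization $b_s = k\,d_s\,(k')^{-1}$---has a genuine gap: the second Iwasawa factor $k'\in K$ never disappears from the expansion, and its matrix entries are not controlled beyond $|\rho^l_{{\bf I},{\bf J}}(k')|\leqslant 1$. Concretely, writing $k = b_s\,k'\,d_s^{-1}$ gives
\[
\rho^l_{[l],{\bf J}}(k) \;=\; \rho^l_{{\bf J},{\bf J}}(d_s)^{-1}\sum_{\bf I}\rho^l_{[l],{\bf I}}(b_s)\,\rho^l_{{\bf I},{\bf J}}(k'),
\]
and the ${\bf I}=[l]$ summand carries the factor $\rho^l_{{\bf J},{\bf J}}(d_s)^{-1}\rho^l_{[l],[l]}(b_s)\sim e^{s\bigl((w_0\zeta,\sqrt{-1}\xi)-(\wt(v_{\bf J}),\sqrt{-1}\xi)\bigr)}$, which \emph{diverges} as $s\to-\infty$ (the exponent is negative and $s<0$). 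So this dominant term is multiplied by the uncontrolled coefficient $\rho^l_{[l],{\bf J}}(k')$, and one cannot conclude anything without already knowing that $\rho^l_{[l],{\bf J}}(k')$ is exponentially small---which is essentially the statement being proved. Your remark that ``unitarity of $\rho^l(k')$ keeps all norms under control'' does not help, since unitarity only gives $O(1)$ bounds, not the needed exponential decay.

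The paper's proof resolves precisely this obstruction by passing to the Hermitian square: from $b_s = k\,d_s\,(k')^{-1}$ one gets $b_s b_s^* = k\,d_s^2\,k^*$, in which $k'$ cancels because $(k')^{-1}\bigl((k')^{-1}\bigr)^*=e$. Comparing the single diagonal entry $\rho^l_{[l],[l]}$ of both sides of $k\,d_s^2\,k^* = b_s b_s^*$ yields a sum of \emph{nonnegative} terms, and the unitarity constraint $\sum_{\bf J}\bigl|\rho^l_{{\bf J},[l]}(k^*)\bigr|^2 = 1$ then allows one to rearrange, discard positive terms, and isolate a bound on each $\bigl|\rho^l_{{\bf J},[l]}(k^*)\bigr|^2 = |\Delta_{[l],{\bf J}}(\rho(k))|^2$ in terms of the ratio $\bigl|\Delta_{w_0\zeta,\zeta}(b_s)/\Delta_{w_0\zeta,w_0\zeta}(d_s)\bigr|^2$, which \eqref{deltadef} and Proposition \ref{prop 2.4} control on $\mathcal{C}^\delta\times\mathbb{T}^m$. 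This squaring step and the use of positivity/unitarity as an $\ell^2$ constraint (rather than a coefficient bound) are the essential ingredients missing from your sketch. Two smaller remarks: the diagonal entry of $\rho^l(d_s)$ at $v_{[l]}$ is $e^{s\lambda_{[l]}}$, not $e^{2s\lambda_{[l]}}$ (the factor $2$ belongs to $d_s^2$); and the exponent comparison you worry about in step (4) is not needed at all in the paper's argument, since the bound there comes from the numerator of the key ratio being $O(e^{2s\delta})$ while the denominator stays bounded away from zero.
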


\begin{proof}
 Let $\wt(v_{[l]})=w_0 \zeta$, where $\zeta\in P_+$ is a dominant weight, and consider the irreducible subrepresentation $G\cdot v_{[l]}$ of $\wedge^l V$ which is generated by $v_{[l]}$. Then in this subrepresentation, $v_{[l]}$ will be of lowest weight. Let ${\bf L}$ denote the index of the highest weight vector of this subrepresentation. It follows that $\wt(v_{\bf L})=\zeta$. Write the matrix entries of $\rho^l(g)$ in the basis $\{v_{\bf{I}}\}$ as $\rho^l_{{\bf{I}},{\bf{J}}}(g)$.
  Note that $\rho^l_{{\bf I},{\bf J}}(g)=\Delta_{{\bf I},{\bf J}}(\rho(g))$. 
  Because $v_{[l]}$ is of lowest weight in the subrepresentation $G\cdot v_{[l]}$, we have
 \begin{equation}\label{rep}
     \rho^l(g)v_{[l]}=\sum_{\substack{ w_0 \zeta<\wt(v_{\bf J}) \\ \text{or }{\bf J} = {[l]}} }  \rho^l_{{\bf J},{[l]}}(g)v_{{\bf J}},
 \end{equation}
 where the sum on the right hand side is over weight vectors $v_{\bf J}$ such that $w_0 \zeta-\wt(v_{\bf J})$ is a negative weight or ${\bf J} = [l]$. In other words, $\rho^l_{{\bf{J}},[l]}(g)=0$ unless $w_0 \zeta<\wt(v_{\bf{J}})$ or ${\bf J}=[l]$. 

Using the definition of the dressing action and the fact that the map $\mathfrak{E}_s$ is $K$-equivariant, we have
\begin{equation}\label{master}
    k\cdot\left(\mathfrak{E}_s(\xi)\right)^2\cdot k^*= \mathfrak{E}_s(\Ad_k^* \xi)\cdot \mathfrak{E}_s(\Ad_k^*\xi)^*.
\end{equation}
Rewrite \eqref{master} as
  \begin{equation}\label{master2}
    k\cdot d_s^2\cdot k^*=b_s\cdot b_s^*
  \end{equation}
  where $d_s=\exp\left( s \sqrt{-1} \psi(\xi)\right)$ and $b_s=\mathfrak{L}_s(p)$. 
  
Let us apply the representation $\rho^l$ to both sides of \eqref{master2}, and consider the $([l],[l])$-entry of these matrices. Using the fact that $\{v_{\bf I}\}$ is a unitary basis for $\wedge^l V$,  matrix multiplication and \eqref{rep} gives us: 
 \begin{equation}\label{eq:1}
     \sum_{\substack{ w_0 \zeta<\wt(v_{\bf J}) \\ \text{or }{\bf J} = {[l]}} }  \big|\rho^l_{{\bf J},{[l]}}(k^*)\big|^2\cdot \big|\rho^l_{{\bf J},{\bf J}}(d_s)\big|^2= \sum_{\substack{ w_0 \zeta<\wt(v_{\bf J})\\ \text{or }{\bf J} = {[l]} } }   \big|\rho^l_{{\bf J},{[l]}}(b_s^*)\big|^2.
 \end{equation}
  Since $\rho^l(k)\cdot \rho^l(k^*)=\rho^l(kk^*)=1$, we have
  \begin{equation}\label{eq:2}
    \sum_{\substack{  w_0 \zeta< \wt(v_{\bf J}) \\ \text{or }{\bf J} = {[l]}}  } \big|\rho^l_{{\bf J},{[l]}}(k^*)\big|^2=1.
  \end{equation}
  Rewrite \eqref{eq:2} as 
  \[
    \big|\rho^l_{[l],[l]}(k^*)\big|^2=1- \sum_{ w_0 \zeta<\wt(v_{\bf J}) }  \big|\rho^l_{{\bf J},{[l]}}(k^*)\big|^2
  \] 
  and plug it into \eqref{eq:1}. After rearranging, we get
  \begin{equation}
     \big|\rho^l_{{[l]},{[l]}}(d_s)\big|^2=\sum_{\substack{  w_0 \zeta< \wt(v_{\bf J}) \\ \text{or }{\bf J} = {[l]}}  }  \big|\rho^l_{{\bf J},{[l]}}(b_s^*)\big|^2 \label{rhosquared} 
     +\sum_{w_0 \zeta< \wt(v_{\bf J})}  \big|\rho^l_{{\bf J},{[l]}}(k^*)\big|^2\cdot \left(\big|\rho^l_{{[l]},{[l]}}(d_s)\big|^2 - \big|\rho^l_{{\bf J},{\bf J}}(d_s)\big|^2\right).
  \end{equation}
 Since $w_0 \zeta<\wt(v_{\bf L})$ and the terms $\big|\rho^l_{{[l]},{[l]}}(d_s)\big|^2 - \big|\rho^l_{{\bf J},{\bf J}}(d_s)\big|^2$ are positive, by discarding terms on the right hand side of \eqref{rhosquared}, one has for any ${\bf J}$ with $w_0 \zeta<\wt(v_{\bf J})$,
  \[
    \big|\rho^l_{{[l]},{[l]}}(d_s)\big|^2>\big|\rho^l_{{\bf L},[l] }(b_s^*)\big|^2+ \big|\rho^l_{{\bf J},{[l]}}(k^*)\big|^2\cdot \left(\big|\rho^l_{{[l]},{[l]}}(d_s)\big|^2 - \big|\rho^l_{{\bf J},{\bf J}}(d_s)\big|^2\right).
  \] 
  Hence
  \begin{equation}\label{eq:3}
    \big|\rho^l_{{\bf J},{[l]}}(k^*)\big|^2<\frac{ \big|\rho^l_{{[l]},{[l]}}(d_s)\big|^2-\big|\rho^l_{{\bf L},{[l]}}(b_s^*)\big|^2}{\big|\rho^l_{{[l]},{[l]}}(d_s)\big|^2 - \big|\rho^l_{{\bf J},{\bf J}}(d_s)\big|^2}=\frac{ 1-\big|\rho^l_{{[l]},{\bf L}}(b_s)\big|^2/\big|\rho^l_{{[l]},{[l]}}(d_s)\big|^2}{1 - \big|\rho^l_{{\bf J},{\bf J}}(d_s)\big|^2/\big|\rho^l_{{[l]},{[l]}}(d_s)\big|^2}.
  \end{equation}
  
   From Proposition \ref{prop 2.4}, because $p\in \mathcal{C}^\delta\times \mathbb{T}^m$, we have
      \[
      C_i(b_s)^2= |\Delta_{w_0\omega_i,\omega_i} (b_s)|^2\left(1+O(e^{2s\delta})\right).
      \]
      On the other hand, from \eqref{master}, for $s< 0$,
      \begin{align*}
      C_i(b_s)^2 & = \Tr(\rho^{\omega_i}(d_s^2))
      = \sum_{j} c_j e^{2s (\gamma_j,\sqrt{-1}\xi)}  
      = e^{2s(w_0\omega_i,\sqrt{-1} \xi)}\left(1+O(e^{2s\delta})\right).
      \end{align*}
      Here, the weights $\gamma_j$ are those which appear in the representation $\rho^{\omega_i}$, and $c_j=1$ when $\gamma_j$ is the extremal weight $w_0\omega_i$. The last equality holds because, by assumption, $\xi\in \pr_{\mathfrak{t}^*}(\mathcal{C}^\delta\times \mathbb{T}^m)$, which in turn guarantees that $(\alpha_i,\sqrt{-1} \xi)>\delta$ for all $i\in I$.
      
      Combining the previous two equations, since $e^{s(w_0\omega_i,\sqrt{-1}\xi)}=\Delta_{w_0\omega_i,w_0\omega_i}(d_s)$, we have
  \[
    \left | \left| \frac{\Delta_{w_0\omega_i,\omega_i}(b_s)}{\Delta_{w_0\omega_i,w_0\omega_i}(d_s)}\right|^2-1 \right|= O(e^{2s\delta}), \quad \text{~for all~} i \in I.
  \]
    For $\zeta\in P_+$, by using \eqref{deltadef} we get
  \begin{equation}\label{eq:0}
     \left | \left| \frac{\Delta_{ w_0 \zeta,\zeta}(b_s)}{\Delta_{w_0\zeta,w_0\zeta}(d_s)}\right|^2-1 \right|=O(e^{2s\delta}),
  \end{equation}
  for $s\ll 0$.
    By the discussion at the end of Section \ref{section2}, we know
  \[
    \rho^l_{[l],[l]}=c \Delta_{ w_0 \zeta,w_0 \zeta} \quad \text{and} \quad \rho^l_{[l],{\bf L}}=c \Delta_{w_0\zeta,\zeta}
  \]
  for some $c\in \mathbb{C}^\times$. By \eqref{eq:0} and \eqref{eq:3}, we find $|\Delta_{[l],{\bf J}}(\rho(k))|=|\Delta_{{\bf J},{[l]}}(\rho(k^*))|=O(e^{s\delta})$.
\end{proof}

\begin{Lem}\label{lem 4.2}
Let $g \colon (-\infty,0) \to \UU(n)$ be an element of $\UU(n)$ depending on a parameter $s$. Assume there exists $\delta>0$ such that
\[|\Delta_{[l],{\bf{J}}}(g(s))|=O(e^{s\delta})\qquad  \text{for all }l\in [n]  \text{ and all }{\bf J}\subset [n]\text{ with } |{\bf J}|=l\text{ and }[l]\ne {\bf J}.\] Then, the matrix entries satisfy $|g_{i,j}(s)|=O(e^{s\delta})$ for all $i\neq j$.
\end{Lem}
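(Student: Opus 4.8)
The plan is to distill from the hypothesis two facts about the unitary matrix $g=g(s)$ and then read off the conclusion: first, that the leading principal minors $\Delta_{[l],[l]}(g)$ have modulus $1-O(e^{2s\delta})$ for every $l$; and second, that as a consequence, writing $g$ in block form adapted to the splitting $[n]=\{1,\dots,l\}\sqcup\{l+1,\dots,n\}$, the two off-diagonal blocks have operator norm $O(e^{s\delta})$. Since any off-diagonal entry $g_{i,j}$ (with $i\ne j$) lies in such an off-diagonal block for a suitable choice of $l$, this will finish the proof.

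For the first fact I would use the identity $\sum_{|\mathbf J|=l}|\Delta_{[l],\mathbf J}(h)|^2=1$, valid for every $h\in\UU(n)$ and every $l$: writing $r_1,\dots,r_l$ for the first $l$ rows of $h$, on one hand $r_1\wedge\cdots\wedge r_l=\sum_{|\mathbf J|=l}\Delta_{[l],\mathbf J}(h)\,e_{\mathbf J}$ in the orthonormal basis $\{e_{\mathbf J}\}$ of $\wedge^l\mathbb{C}^n$, and on the other hand its squared norm equals $\det(\langle r_i,r_j\rangle)_{1\le i,j\le l}=1$ because the rows of a unitary matrix are orthonormal. Feeding in the hypothesis, which contributes a finite sum of terms each of size $O(e^{s\delta})$, gives $|\Delta_{[l],[l]}(g)|^2=1-O(e^{2s\delta})$ for each $l\in[n]$.

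Next I would fix $l$ and write $g=\left(\begin{smallmatrix}A&B\\C&D\end{smallmatrix}\right)$ with $A$ the leading $l\times l$ block. Since $A$ is a compression of a unitary matrix, all its singular values $\sigma_1,\dots,\sigma_l$ lie in $[0,1]$; their product equals $|\det A|=|\Delta_{[l],[l]}(g)|=1-O(e^{2s\delta})$, and since the remaining factors are at most $1$ this forces each $\sigma_j=1-O(e^{2s\delta})$, so that $\|I_l-AA^*\|=\|I_l-A^*A\|=O(e^{2s\delta})$. Comparing the leading $l\times l$ diagonal blocks on the two sides of $gg^*=g^*g=I_n$ yields $BB^*=I_l-AA^*$ and $C^*C=I_l-A^*A$, hence $\|B\|^2=\|BB^*\|=O(e^{2s\delta})$ and likewise $\|C\|^2=O(e^{2s\delta})$; in particular every entry of $B$ and of $C$ is $O(e^{s\delta})$. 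Finally, given $i\ne j$: if $i<j$, then taking $l=i$ makes $g_{i,j}$ an entry of $B$, and if $i>j$, then taking $l=j$ makes $g_{i,j}$ an entry of $C$; either way $|g_{i,j}(s)|=O(e^{s\delta})$.

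I do not expect a genuine obstacle — the argument is a short chain of standard linear-algebra estimates. The one place that calls for a little care is the elementary step that a product of numbers in $[0,1]$ lying within $O(e^{2s\delta})$ of $1$ forces each individual factor to lie within $O(e^{2s\delta})$ of $1$ (one uses that the other factors are bounded above by $1$), together with the bookkeeping needed to reconcile this with the paper's two-sided $O$-notation on all of $(-\infty,0)$: the estimates above hold honestly once $s$ is sufficiently negative, while for $s$ near $0$ the trivial bound $|g_{i,j}|\le 1$ coming from unitarity keeps everything within $O(e^{s\delta})$.
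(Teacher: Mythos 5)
Your proof is correct, and it takes a genuinely different route from the paper's. The paper proceeds by induction on the row index $i$: the base case $i=1$ reads $|g_{1,j}|=O(e^{s\delta})$ directly from the $1\times 1$ minors, and the inductive step handles $j<i$ by orthonormality of the $i$-th row against the earlier rows (whose off-diagonal entries are already controlled), and handles $j>i$ by expanding the minor $\Delta_{[i],\{1,\dots,i-1,j\}}(g)$ along its last column and isolating the term $g_{i,j}\,\Delta_{[i-1],[i-1]}(g)$, with the leading principal minor bounded away from zero by the induction hypothesis. Your argument replaces this induction with a single global step: the Cauchy--Binet identity $\sum_{|\mathbf J|=l}|\Delta_{[l],\mathbf J}(g)|^2=1$ pins down $|\Delta_{[l],[l]}(g)|$, the singular-value inequality $\sigma_j\geqslant\prod_k\sigma_k$ (valid when all $\sigma_k\leqslant 1$) upgrades the determinant estimate to $\|I-A^*A\|=O(e^{2s\delta})$, and the block identities from $gg^*=g^*g=I$ then kill the off-diagonal blocks $B,C$ in operator norm. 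The paper's induction is more elementary, using only orthogonality of rows and Laplace expansion of a single minor per step; yours is more structural, treating each block size $l$ at once and producing the stronger, manifestly basis-coherent bound $\|B\|,\|C\|=O(e^{s\delta})$ on full off-diagonal blocks rather than entrywise estimates accumulated row by row. Both establish exactly what is needed. Your closing remark about reconciling the two-sided $O$-notation on all of $(-\infty,0)$ with the trivial bound $|g_{i,j}|\leqslant 1$ near $s=0$ is a sound bit of bookkeeping that the paper leaves implicit.
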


\begin{proof}
We proceed by induction on $i$. When $i=1$, we have $|g_{1,j}| = O(e^{s\delta})$ for $j\ne 1$. Assume the statement is known for $1,\dots, i-1$. By induction hypothesis and the fact that $g$ is unitary, we have $1-|g_{j,j}|=O(e^{s\delta})$ for $j<i$. By taking inner product of the $i^{th}$ row with the previous rows and again using the fact that $g$ is unitary, we have $|g_{i,j}|=O(e^{s\delta})$ for $j<i$. For $j>i$, consider the minor $\Delta_{[i],{\bf J}}(g)$, where ${\bf J}=\{1,\dots, i-1, j \}$. By assumption, $|\Delta_{[i],{\bf J}}(g)|=O(e^{s\delta})$. Expanding this minor along the $j^{th}$ column and applying the induction hypothesis, we have that $|g_{i,j}|=O(e^{s\delta})$. 
\end{proof}

Recall that $\mathcal{N}_{s\xi}$ is the preimage $(\mathfrak{E}_s\n\circ \mathfrak{L}_s)\n(\mathcal{O}_{\xi})$.

\begin{Pro}
\label{lastcor}

For all $\xi\in \mathfrak{t}^*_+$, if $U\subset \mathcal{O}_\xi$ is an open subset with $\xi\in U$, then for all sufficiently small $\delta>0$, there exists $s_0\in \mathbb{R}$ so that, for all $s\leqslant s_0$, 
\[
\mathfrak{E}_s\n\circ \mathfrak{L}_s\left(\mathcal{N}_{s\xi}\cap(\mathcal{C}^\delta\times \mathbb{T}^m)\right)\subseteq U.
\]
\end{Pro}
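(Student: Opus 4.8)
The plan is to reduce the statement to a concrete estimate in a faithful representation and then invoke Lemmas \ref{mainlemma} and \ref{lem 4.2}. First I fix a faithful irreducible representation $(\rho,V)$ of $G$ with a unitary weight basis $v_1,\dots,v_n$ chosen as in Section \ref{section4}, so that each $v_{[l]} = v_1\wedge\cdots\wedge v_l$ is a minimal weight vector of $\wedge^l V$. I translate the hypothesis ``$\mathfrak{E}_s(\Ad_k^*\xi) \in \mathfrak{L}_s(\mathcal{C}^\delta\times\mathbb{T}^m)$'' into the statement that $k$ lies in the set $U_s$ of Lemma \ref{mainlemma}, using that $\mathcal{N}_{s\xi} = (\mathfrak{E}_s\n\circ\mathfrak{L}_s)\n(\mathcal{O}_\xi)$ and that $\mathfrak{E}_s$ is $K$-equivariant: a point of $\mathcal{N}_{s\xi}\cap(\mathcal{C}^\delta\times\mathbb{T}^m)$ corresponds under $\mathfrak{E}_s\n\circ\mathfrak{L}_s$ to a point $\Ad_k^*\xi\in\mathcal{O}_\xi$, and the condition $p\in\mathcal{C}^\delta\times\mathbb{T}^m$ together with (shrinking $\delta$ if needed so that $\xi\in\pr_{\mathfrak{t}^*}(\mathcal{C}^\delta\times\mathbb{T}^m)$) puts $k$ in $U_s$.

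Next I apply Lemma \ref{mainlemma} to $\rho(k)\in\UU(n)$ for every $l\in[n]$ and every ${\bf J}\subset[n]$ with $|{\bf J}|=l$, ${\bf J}\ne[l]$, obtaining $|\Delta_{[l],{\bf J}}(\rho(k))| = O(e^{s\delta})$ uniformly over $k\in U_s$. Then Lemma \ref{lem 4.2} applied to $g(s) = \rho(k(s))$ gives $|\rho(k)_{i,j}| = O(e^{s\delta})$ for all $i\ne j$, and hence, again by unitarity, $|1-\rho(k)_{i,i}| = O(e^{s\delta})$. After adjusting each diagonal entry by a phase (which lies in $\rho(T)$ since $\rho(k)$ is upper-triangular-to-diagonal in the limit and weight spaces are one-dimensional for a suitable choice, or more carefully: the distance from $\rho(k)$ to $\rho(T)$ is $O(e^{s\delta})$), I conclude that the distance from $k$ to $T$ in $K$ is $O(e^{s\delta})$, uniformly over $k\in U_s$. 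Since the map $K/T\to\mathcal{O}_\xi$, $kT\mapsto\Ad_k^*\xi$, is smooth and sends $eT\mapsto\xi$, it follows that $\Ad_k^*\xi$ lies in any prescribed neighborhood $U$ of $\xi$ once $s$ is sufficiently negative.

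Finally I assemble the conclusion: given the open $U\ni\xi$, choose $\delta>0$ small enough that $\xi\in\pr_{\mathfrak{t}^*}(\mathcal{C}^\delta\times\mathbb{T}^m)$ and that the uniform bound above forces $\Ad_k^*\xi\in U$ for all $k\in U_s$; then pick $s_0<0$ so that the $O(e^{s\delta})$ terms are small enough for all $s\leqslant s_0$. This yields $\mathfrak{E}_s\n\circ\mathfrak{L}_s(\mathcal{N}_{s\xi}\cap(\mathcal{C}^\delta\times\mathbb{T}^m))\subseteq U$.

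The main obstacle I anticipate is the passage from the matrix-entry estimates $|\rho(k)_{i,j}|=O(e^{s\delta})$ for $i\ne j$ back to a genuine estimate on the distance from $k$ to $T$ inside $K$ (not merely inside $\UU(n)$): one must check that the weight basis and the representation are chosen so that a matrix $C^1$-close to a diagonal unitary with the correct weight pattern actually comes from a point close to $T$, and that this closeness is uniform in $k$. Handling the phases on the diagonal entries carefully — showing they can be absorbed into $T$ up to an $O(e^{s\delta})$ error — is the delicate point; the rest is a routine compactness-and-continuity argument.
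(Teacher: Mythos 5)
Your proposal is correct and follows the paper's proof essentially verbatim: pass to the set $U_s'\subseteq U_s$ of Lemma~\ref{mainlemma}, apply Lemma~\ref{lem 4.2} to $\rho(k)$, and conclude by faithfulness of $\rho$. The paper is equally terse on the ``delicate point'' you flag; it is handled not by picking a representation with multiplicity-free weights (which does not exist for, e.g., $E_8$) but by the standard observation that if $\rho(k)$ is exactly diagonal in a weight basis then $\rho(k)$ commutes with $\rho(T)$, hence $k\in Z_K(T)=T$, so $\rho(K)\cap(\text{diagonal unitaries})=\rho(T)$; a compactness argument then upgrades ``$\rho(k)$ close to diagonal'' to ``$k$ close to $T$,'' uniformly, and continuity of $kT\mapsto\Ad_k^*\xi$ finishes the proof exactly as you say.
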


\begin{proof} Fix $\xi \in \mathfrak{t}_+^*$, $U\subseteq \mathcal{O}_\xi$ open with $\xi \in U$, and $\delta >0$ sufficiently small so that $\xi \in \pr_{\mathfrak{t}^*}(\mathcal{C}^\delta\times \mathbb{T}^m)$.  Observe that for all $s<0$, 
\[
    U_s' = \{ k \in K \mid \mathfrak{E}_s(\Ad_k^* \xi) \in \mathfrak{L}_s(\mathcal{N}_{s\xi} \cap (\mathcal{C}^{\delta} \times \mathbb{T}^m)) \} \subseteq U_s.
\]
By Lemma \ref{mainlemma}, there exists $a >0$ such that for all $k \in U_s'$, 
\[
    |\Delta_{[l],{\bf J}}(\rho(k))| \leqslant a e^{s\delta}.
\]
By Lemma \ref{lem 4.2} and since $\rho$ faithful, there exists $s_0<0$ such that for all $s\leqslant s_0$, 
\[
    \mathfrak{E}_s\n\circ \mathfrak{L}_s\left(\mathcal{N}_{s\xi}\cap(\mathcal{C}^\delta\times \mathbb{T}^m)\right)\subseteq U.\tag*{\qedhere}
\]
\end{proof}

\newpage

\Addresses

\end{document}